\theoremstyle{plain}
\newtheorem{thm}{Theorem}[section]
\newtheorem{defi}[thm]{Definition}
\newtheorem{lem}[thm]{Lemma}
\newtheorem{prop}[thm]{Proposition}
\newtheorem{rmk}[thm]{Remark}
\newcommand{\R}{\mathbb{R}}
\newcommand{\N}{\mathbb{N}}
\DeclareMathOperator{\Id}{Id}
\DeclareMathOperator{\trace}{tr}
\DeclareMathOperator{\argmin}{argmin}
\title{Asymptotic behaviour of stochastic inertial dynamics incorporating a Tikhonov regularization term}
\author{Chiara Schindler \footnote{Faculty of Mathematics, University of Vienna, Oskar-Morgenstern-Platz 1, 1090 Vienna, Austria, e-mail: \url{chiara.schindler@univie.ac.at}.}}
\begin{document}

\maketitle

\begin{abstract}
    In a separable Hilbert space, we study the minimization problem of a convex smooth function with Lipschitz continuous gradient whose evaluations are corrupted by random noise. To this end, we associate a stochastic inertial system that incorporates Tikhonov regularization with the optimization problem. We establish existence and uniqueness of a solution trajectory for this system. Then, we derive an upper bound on the expected value of an appropriate associated energy function given square-integrability of the diffusion $\sigma_X$ before focusing on the particular case where the parameter function multiplied by the Tikhonov term is given by $\frac{1}{t^r}$ for $0<r<2$. For this setting, we show a.s. convergence rates as well as convergence rates in expectation for the function values along the trajectory to an infimal value, the trajectory process to an optimal solution and its time derivative to zero under a stronger integrability condition on $\sigma_X$.
\end{abstract}

\noindent \textbf{Key words.} Tikhonov regularization, Convex optimization, Stochastic continuous second order dynamical system, convergence of trajectories, convergence rates
\newline \noindent \textbf{AMS Subject classification.} 34F05, 37N40, 60H10, 90C15, 90C25, 90C30

\section{Introduction}

Consider the minimization problem of a convex differentiable function $f:\mathcal{H}\rightarrow \R$ with Lipschitz continuous gradient, where $\mathcal{H}$ denotes a separable real Hilbert space,
\begin{align*}
    \inf_{x\in\mathcal{H}} f(x),
\end{align*}
and assume that its set of solutions $\{x^* \in \mathcal{H}: f(x^*) = \inf_{x\in\mathcal{H}} f\}$ is nonempty, where $\inf_{x\in\mathcal{H}} f\in\R$ denotes the infimal value of $f$. We attach the following Tikhonov regularized inertial system to the minimization problem:
\begin{align}\label{eq:s-TRIGS}\tag{S-TRIGS}
\begin{cases}
    dX(t) = Y(t)dt
    \\ dY(t) = (-\delta \sqrt{\varepsilon(t)}Y(t) - \nabla f(X(t)) - \varepsilon(t) X(t))dt + \sigma_X(t) dW_X(t)
    \\ X(t_0) = X_0, \ Y(t_0) = Y_0,
\end{cases}
\end{align}
defined on a filtered probability space $(\Omega,\mathcal{F},\{\mathcal{F}_t\}_{t\geq t_0},\mathbb{P})$, where $W$ denotes a $\mathcal{H}$-valued Brownian motion, the diffusion term, denoted by $\sigma_X :[t_0,+\infty)\rightarrow \mathcal{L}(\mathcal{H},\mathcal{H})$, is a measurable map, $\varepsilon$ is a nonnegative function such that $\lim_{t\rightarrow +\infty} \varepsilon(t) = 0$ and $\delta>0, \ \lambda>0$. Note that $\varepsilon$ is present in both the viscous damping and the Tikhonov regularization terms, so $\lim_{t\rightarrow +\infty} \varepsilon(t) = 0$ leads to asymptotic vanishing of the damping coefficient in coordination with the Tikhonov regularization coefficient.

We aim to establish existence and uniqueness of trajectory solutions of~\eqref{eq:s-TRIGS} and to investigate its asymptotic behaviour. First, we derive some convergence results for $f(X(t)) - \inf_{\mathcal{H}} f$ and $\|X(t)-x_{\varepsilon(t)}\|$, where $x_{\varepsilon(t)} := \argmin \left\{ f+ \frac{\varepsilon(t)}{2}\|\cdot\|^2\right\}$ tends to the minimum norm minimizer of $f$ as $t\rightarrow +\infty$, for general $\varepsilon$ satisfying the conditions laid out above as well as~\eqref{eq:condition-tikhonov-parameter}, a technical requirement on the Tikhonov parameter that is necessary for the proofs of this paper. As a next step, we move to the particular case $\varepsilon(t) = \frac{1}{t^r}$, where $0<r<2$, and derive the following convergence rates a.s. as well as in expectation: We show convergence rates for $f(X(t)) - \inf_{\mathcal{H}} f$, $\|X(t)-x_{\varepsilon(t)}\|^2$ and $\|Y(t)\|$ as $t\rightarrow +\infty$ under the condition that $\int_{t_1}^{+\infty} \gamma(s)\|\sigma_X(s)\|_{HS}^2 ds < +\infty$, where $\gamma(t) = \exp\left(\int_{t_1}^{t} - \frac{\dot{\varepsilon}(s)}{2\varepsilon(s)} + (\delta - \lambda)\sqrt{\varepsilon(s)} ds\right) = \left(\frac{t}{t_1}\right)^\frac{r}{2}\exp\left(\frac{2(\delta-\lambda)}{2-r}\left(t^{1-\frac{r}{2}} - t_1^{1-\frac{r}{2}}\right)\right)$, where $t_1>t_0$ is introduced by the condition~\eqref{eq:condition-tikhonov-parameter}. The integral condition would be the satisfied e.g. for $\sigma(t) := \exp(-t^{1-\frac{r}{2}+\alpha})\Id$, where $\alpha>0$.

Note that the system~\eqref{eq:s-TRIGS} is indeed the stochastic version of a Tikhonov regularized inertial gradient system
\begin{align}\label{eq:TRIGS}\tag{TRIGS}
    \ddot{x}(t) + \delta\sqrt{\varepsilon(t)}\dot{x}(t) + \nabla f(x(t)) + \varepsilon(t)x(t) = 0,
\end{align}
which was introduced in~\cite{attouch-laszlo} and considered more generally with an added Hessian driven damping term, so
\begin{align*}
    \ddot{x}(t) + \delta\sqrt{\varepsilon(t)}\dot{x}(t) + \beta\nabla^2 f(x(t))\dot{x}(t) + \nabla f(x(t)) + \varepsilon(t)x(t) = 0,
\end{align*}
in~\cite{attouch-balhag-chbani-riahi}, where it is shown to exhibit the same convergence rates that we prove in the stochastic setting in the sections~\ref{sec:general-epsilon} and~\ref{sec:particular-case-epsilon-1/t^r}.

Consider the Tikhonov regularized system with asymptotic vanishing damping
\begin{align}\label{eq:avd-tikh}
    \ddot{x}(t) + \frac{\alpha}{t}\dot{x}(t) + \nabla f(x(t)) + \varepsilon(t)x(t) = 0,
\end{align}
studied in~\cite{attouch-chbani-riahi-fast-inertial-dynamics-with-tikhonov}. This is related to the non-Tikhonov regularized system via strong convexity: Replace the convex smooth function by the strongly convex smooth mapping $x\mapsto f(x) + \frac{\varepsilon(t)}{2}\|x\|^2$ to obtain~\eqref{eq:avd-tikh} from the non-Tikhonov regularized version
\begin{align*}
    \ddot{x}(t) + \frac{\alpha}{t}\dot{x}(t) + \nabla f(x(t)) = 0,
\end{align*}
which was introduced by Su, Boyd and Cand\`{e}s in~\cite{su-boyd-candes} and exhibits a rate of convergence $f(x(s)) - \inf_{\cal H} = \mathcal{O}\left(\frac{1}{s^2}\right)$ as $s \to +\infty$ given that $\alpha \geq 3$. Thus, it outperforms the classical gradient flow, for which we have the rate of convergence $f(x(s)) - \inf_{\cal H} = o\left(\frac{1}{s}\right)$ as $s \to +\infty$. Even more, according to~\cite{attouch-chbani-peypoquet-riahi}, if $\alpha >3$, then $f(x(s)) - \inf_{\cal H} = o\left(\frac{1}{s^2}\right)$, and the trajectory solution $x(s)$ converges weakly to a minimizer of $f$ as $s \to +\infty$. An explicit discretization of this system leads to the Nesterov accelerated gradient method~\cite{nesterov-acc-gradient, chambolle-dossal}, which shares the asymptotic properties of the continuous time dynamics. The stochastic version of this, given by
\begin{align}\label{eq:SAVD-alpha}\tag{SAVD}
    \begin{cases}
        dX(t) = Y(t)dt
        \\ dY(t) = \left(-\frac{\alpha}{t}Y(t) - \nabla f(X(t))\right) dt + \sigma_{X}(t) dW(t)
        \\ X(t_0) = X_0, \ \mbox{} \ Y(t_0) = Y_0,
    \end{cases}
\end{align}
was examined in~\cite{mfao-stochasticintertialgradient} and arose again via the methodology of time rescaling in~\cite{bot-s-heavy-ball}.

Following a similar line of reasoning, the system~\eqref{eq:TRIGS} can be derived from the following equation describing the heavy ball:
\begin{align}\label{eq:heavy-ball}
    \ddot{x}(t) + 2\sqrt{\mu}\dot{x}(t) + \nabla f(x(t)) = 0,
\end{align}
which was proved in~\cite{polyak-speedingupconvergence} to enjoy the convergence property $f(x(t)) - \inf_\mathcal{H} f = \mathcal{O}\left(\exp\left(-\sqrt{\mu}t\right)\right)$ as $t\rightarrow +\infty$ if $f$ is $\mu$-strongly convex. Again replacing the convex smooth function $f$ by the $\varepsilon(t)$-strongly convex smooth mapping $x\mapsto f(x) + \frac{\varepsilon(t)}{2}\|x\|^2$ yields~\eqref{eq:TRIGS} for~\eqref{eq:heavy-ball}. 

It is worth noting that the accelerated convergence of function values observed in the deterministic Su–Boyd–Cand\`{e}s dynamical system, when compared to the classical gradient flow, also manifests in the stochastic setting. In particular, analogous improvements arise for the stochastic differential equation~\eqref{eq:SAVD-alpha} when compared with the stochastic gradient flow
\begin{align*}
    \begin{cases}
        dX(t) = -\nabla f(X(t))dt + \sigma_X(t)dW(t)
        \\ X(t_0) = X_0,
    \end{cases}
\end{align*}
that has been explored in~\cite{soto-fadili-attouch}.

A Tikhonov regularized differential inclusion without momentum term
\begin{align*}
\begin{cases}
    dX(t)\in - \partial F(X(t)) - \varepsilon(t)X(t) + \sigma(t,X(t)) dW(t),
    \\ X(t_0) = X_0
\end{cases}
\end{align*}
has been examined in~\cite{mfa-tikh}: For this system, almost sure strong convergence of the trajectory was achieved under suitable assumptions on the parameter function $\varepsilon$. Given different requirements, we prove an analogous result for the inertial Tikhonov regularized system in the present paper. Furthermore, the inertial setting additionally allows for an almost sure bound on the function values.

\section{Results for a general Tikhonov regularization parameter}\label{sec:general-epsilon}
Let $\mathcal{H}$ be a real separable Hilbert space. Consider the second-order dynamic
\begin{align*}
    \ddot{x}(t) + \delta\sqrt{\varepsilon(t)}\dot{x}(t) + \nabla f(x(t)) + \varepsilon(t)x(t) = 0 \ \mbox{on} \ [t_0,\infty) \ \mbox{with} \ t_0>0
\end{align*}
introduced in~\cite{attouch-balhag-chbani-riahi} for the deterministic setting. Throughout the following, assume that $f$ and $\varepsilon$ satisfy
\begin{align}\tag{A}\label{eq:assumption-f}
    \begin{cases}
        f:\mathcal{H}\rightarrow \R \ \mbox{is convex, of class} \ \mathcal{C}^1, \ \mbox{with} \ L \mbox{-Lipschitz continuous gradient;}
        \\ S:=\argmin_\mathcal{H} f \neq \emptyset. \ \mbox{We denote the element of minimum norm of} \ S \ \mbox{by} \ x^*;
        \\ \varepsilon:[t_0,+\infty)\rightarrow \R^+ \ \mbox{is a nonincreasing function, of class} \ \mathcal{C}^1, \ \mbox{such that} \ \lim_{t\rightarrow + \infty} \varepsilon(t) = 0.
    \end{cases}
\end{align}
In order to examine this system in the stochastic setting, consider stochastic It\^{o} processes $X,Y$ satisfying
\begin{align}\tag{S-TRIGS}
\begin{cases}
    dX(t) = Y(t)dt
    \\ dY(t) = (-\delta \sqrt{\varepsilon(t)}Y(t) - \nabla f(X(t)) - \varepsilon(t) X(t))dt + \sigma_X(t) dW_X(t),
\end{cases}
\end{align}
where we require $\sigma_X(t)$ to be square-integrable, i.e. $\int_{t_1}^{+\infty} \|\sigma_X(s)\|^2ds < +\infty$.

In our analysis, we shall further assume that $\varepsilon(\cdot)$ satisfies the growth condition detailed below.
\begin{defi}
    The Tikhonov regularization parameter $\varepsilon(\cdot)$ satisfies the condition~\eqref{eq:condition-tikhonov-parameter} if there exists $a>1$, $c>2$, $\lambda>0$ and $t_1\geq t_0$ such that for all $t\geq t_1$:
    \begin{align}\label{eq:condition-tikhonov-parameter}
        \frac{d}{dt}\left(\frac{1}{\sqrt{\varepsilon(t)}}\right)\leq \min \left(2\lambda-\delta, \frac{1}{2}\left(\delta-\frac{a+1}{a}\lambda\right)\right),
    \end{align}
    where $\frac{\delta}{2}<\lambda<\delta$ and
    \begin{align*}
        \frac{1}{2}\left(\delta+\frac{1}{c} + \sqrt{\left(\delta+\frac{1}{c}\right)^2 - 2}\right) < \lambda < \min\left(\frac{a}{a+1}\delta, \frac{\delta+\sqrt{\delta^2-4}}{2}\right) \ \mbox{when} \ \delta>2.
    \end{align*}
\end{defi}
Before we can move to convergence results, we have to establish the existence and uniqueness of the solution of~\eqref{eq:s-TRIGS}.
\begin{thm}
    If $\sigma_X:\R\rightarrow L^2(\mathcal{H},\mathcal{H})$ is square-integrable, then the SDE~\eqref{eq:s-TRIGS} has a unique solution $\in S_\mathcal{H}^\nu$, where $\nu\geq 2$. Further, this solution is a strong solution.
\end{thm}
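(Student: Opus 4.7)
The approach is to recast the system as a single SDE on the product Hilbert space $\mathcal{H}\times\mathcal{H}$ for the joint process $Z(t) = (X(t), Y(t))$ and then appeal to a standard existence and uniqueness theorem for Hilbert-space valued SDEs with Lipschitz and linearly growing coefficients. Setting
$b(t,x,y) := (y,\,-\delta\sqrt{\varepsilon(t)}\,y - \nabla f(x) - \varepsilon(t)\,x)$
and a block diffusion $\Sigma(t)$ whose only nontrivial entry is $\sigma_X(t)$, the system takes the form $dZ(t) = b(t, Z(t))\,dt + \Sigma(t)\,dW(t)$ with deterministic diffusion coefficient.

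For the drift I would verify joint measurability in $(t,z)$ (immediate from continuity of $\varepsilon$ and of $\nabla f$), the Lipschitz estimate
$\|b(t,z_1) - b(t,z_2)\| \leq \bigl(L + \varepsilon(t) + \delta\sqrt{\varepsilon(t)} + 1\bigr)\|z_1 - z_2\|$
with a constant that is uniformly bounded on every compact time interval since $\varepsilon \in \mathcal{C}^1$ is nonincreasing, and a linear growth bound (automatic once Lipschitzness plus control of $\|b(t,0)\|$ is in place, using the sublinear growth of $\nabla f$ coming from Lipschitzness). For the diffusion, square-integrability of $\sigma_X$ in the Hilbert–Schmidt norm ensures $\int_{t_0}^{T}\|\Sigma(s)\|_{HS}^{2}\,ds < +\infty$ for every $T$, which is precisely the integrability needed for the It\^o integral to define an $L^{2}$-martingale on every bounded interval. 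With these ingredients, I would invoke a classical existence and uniqueness theorem in the Hilbert-space setting to obtain a unique continuous adapted process $Z$ solving the equation $\mathbb{P}$-a.s.\ on every $[t_0, T]$; the Picard iteration underlying the proof constructs the solution relative to the given Brownian motion, so the solution is strong. Patching over an increasing sequence of horizons then yields a global strong solution.

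To upgrade the solution to the space $S_{\mathcal{H}}^{\nu}$ for arbitrary $\nu \geq 2$, I would apply It\^o's formula to $\|Z(t)\|^{\nu}$, control the martingale part via the Burkholder–Davis–Gundy inequality, and exploit the deterministic, Gaussian character of the stochastic integral to bound $\mathbb{E}\bigl(\int_{t_0}^{t}\|\sigma_X(s)\|_{HS}^{2}\,ds\bigr)^{\nu/2}$ in terms of the assumed $L^{2}$-norm of $\sigma_X$, concluding by Gr\"onwall's inequality. The main obstacle is precisely this moment propagation: higher-order moments are not automatic from the mild hypothesis that $\sigma_X$ is only square-integrable rather than uniformly bounded. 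The key observation that makes this work is that since the diffusion is deterministic, the stochastic integral is Gaussian, so its $\nu$-th moment is controlled by the $\nu/2$-th power of its quadratic variation; combined with the globally Lipschitz and linearly growing drift, the $\nu$-th moment bound then follows from a standard Gr\"onwall estimate in expectation driven by a deterministic, locally integrable function of time.
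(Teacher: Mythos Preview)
Your proposal is correct and follows essentially the same route as the paper: rewrite \eqref{eq:s-TRIGS} as a single SDE on $\mathcal{H}\times\mathcal{H}$, verify a global-in-space (locally-uniform-in-time) Lipschitz estimate and a linear growth bound for the drift, note that the diffusion is deterministic, and conclude via a standard existence/uniqueness theorem on each $[t_0,T]$ followed by extension.

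The only substantive difference is how the $S_\mathcal{H}^\nu$ membership is obtained. The paper simply invokes its Theorem~\ref{thm:existence-uniqueness-solutions}\eqref{thm:item:help-existence-uniqueness} as a black box once the Lipschitz condition is checked; in its linear growth estimate it tacitly uses $\sigma_{X,*}:=\sup_{t\geq t_0}\|\sigma_X(t)\|_{HS}$, i.e.\ it effectively treats $\sigma_X$ as bounded. You instead propose to establish the $\nu$-th moment bound by hand via It\^o's formula, BDG, and Gr\"onwall, exploiting that the stochastic integral is Gaussian because the diffusion is state-independent. Your route is more self-contained and, in fact, more careful about the stated hypothesis that $\sigma_X$ is only square-integrable rather than bounded; the paper's citation-based argument is shorter but leans on an implicit boundedness that the statement does not literally assume.
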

\begin{proof}
    For convenience, we recall~\eqref{eq:s-TRIGS}:
    \begin{align*}
        \begin{cases}
            dX(t) = Y(t)
            \\ dY(t) = (-\delta\sqrt{\varepsilon(t)}Y(t) - \nabla f(X(t)) - \varepsilon(t)X(t))dt + \sigma_X(t)dW_X(t).
        \end{cases}
    \end{align*}
    Next, introduce the notation
    \begin{align*}
        \sigma_{X,*} := \sup_{t\in[t_0,\infty)} \|\sigma_X(t)\|_{HS},
    \end{align*}
    where $\|\cdot\|_{HS}$ denotes the Hilbert-Schmidt norm. Now fix $T>t_0$ and consider the system for $t\in[t_0,T]$. It holds
    \begin{align*}
        \|y_1 &- y_2\|^2 + \|-\delta\sqrt{\varepsilon(t)}(y_1 - y_2) - \nabla f(x_1) + \nabla f(x_2) - \varepsilon(t)(x_1-x_2)\|^2
        \\&\leq \|y_1-y_2\|^2 + 3\delta^2\varepsilon(t)\|y_1-y_2\|^2 + 3(\varepsilon(t)^2 + L^2)\|x_1-x_2\|^2
        \\&\leq \max(3\delta^2\varepsilon(t), 3(\varepsilon(t)^2 + L^2))(\|y_1-y_2\|^2 + \|x_1-x_2\|^2)
        \\&\leq \sup_{t\in[t_0,T]}\left(\max\left(3\delta^2\varepsilon(t), 3(\varepsilon(t)^2 + L^2)\right)\right)\left\|\begin{pmatrix}
        x_1-x_2
        \\ y_1-y_2\end{pmatrix}\right\|^2,
    \end{align*}
    so Theorem~\ref{thm:existence-uniqueness-solutions}~\eqref{thm:item:help-existence-uniqueness} yields the fact that the solution lies in $S_\mathcal{H}^\nu$ for $\nu\geq 2$. In order to prove that it is a strong solution, verify
    \begin{align*}
        &\left\|\begin{pmatrix}
            Y(t)
            \\ -\delta\sqrt{\varepsilon(t)}Y(t) - \nabla f(X(t)) - \varepsilon(t)X(t)
        \end{pmatrix}\right\| + \|\sigma_X(t)\|_{HS}
        \\&\quad\leq \sqrt{\|Y(t)\|^2 + \|-\delta\sqrt{\varepsilon(t)}Y(t) - \nabla f(X(t))-\varepsilon(t)X(t)\|^2} + \sigma_{X,*}
        \\&\quad\leq \sqrt{\|Y(t)\|^2 + 3\delta^2\varepsilon(t)\|Y(t)\|^2 + 3\|\nabla f(X(t))\|^2 + 3\varepsilon(t)^2\|X(t)\|^2} + \sigma_{X,*}
        \\&\quad\leq \sqrt{(1+3\delta^2\varepsilon(t))\|Y(t)\|^2 + 3\|\nabla f(X(t))\|^2 + 3\varepsilon(t)^2\|X(t)\|^2} + \sigma_{X,*}
        \\&\quad\leq \sqrt{(1+3\delta^2\varepsilon(t))\|Y(t)\|^2 + (6L^2+3\varepsilon(t)^2)\|X(t)\|^2 + 6L^2\|x^*\|^2} + \sigma_{X,*}
        \\&\quad\leq \sqrt{\sup_{t\in[t_0,T]}\left(\max\left((1+3\delta^2\varepsilon(t)), (6L^2+3\varepsilon(t)^2), 6L^2\|x^*\|^2, \sigma_{X,*}^2\right)\right)} \left(1+\sup_{t\in[t_0,T]}\left\|\begin{pmatrix}
            X(t)
            \\Y(t)
        \end{pmatrix}\right\|\right),
    \end{align*}
    thus satisfying the necessary conditions for Theorem~\ref{thm:existence-uniqueness-solutions}~\eqref{thm:item:existence-uniqueness-strong-solution} and proving the desired claim.
\end{proof}

In order to define the energy function we will be working with in the following theorem, introduce
\begin{align*}
    \varphi_t(x) := f(x) + \frac{\varepsilon(t)}{2}\|x\|^2, \hfill x_{\varepsilon(t)} := \argmin\left\{ f+\frac{\varepsilon(t)}{2}\|\cdot\|^2\right\}, \hfill V(t,x,y):= \lambda\sqrt{\varepsilon(t)}(x-x_{\varepsilon(t)}) + y.
\end{align*}
Now set
\begin{align}\label{eq:def-energy-function}
    \mathcal{E}(t,x,y) := \varphi_t(x)-\varphi_t(x_{\varepsilon(t)}) + \frac{1}{2}\|V(t,x,y)\|^2.
\end{align}
\begin{thm}\label{thm:rates-in-expectation}
    Let $\sigma_X$ be square-integrable and denote the minimum norm minimizer of $f$ by $x^*$. Then, it holds
    \begin{align*}
        \mathbb{E}(\mathcal{E}(&t,X(t),Y(t)))
        \\&\leq \frac{\gamma(t_1)}{\gamma(t)}\mathbb{E}(\mathcal{E}(t_1,X(t_1),Y(t_1))) + \frac{1}{\gamma(t)}\int_{t_1}^t\frac{\|x^*\|^2}{2}G(s)\gamma(s) ds + \int_{t_1}^{+\infty}\|\sigma_X(s)\|_{HS}^2ds \ \forall t\geq t_1,
\end{align*}
where
\begin{align*}
    G(t):= (a+c)\lambda\frac{\dot{\varepsilon}(t)^2}{\varepsilon(t)^\frac{3}{2}} - \dot{\varepsilon}(t), \ \mu(t) = -\frac{\dot{\varepsilon}(t)}{2\varepsilon(t)} + (\delta-\lambda)\sqrt{\varepsilon(t)}, \ \gamma(t) = \exp\left(\int_{t_1}^t \mu(s)ds\right).
\end{align*}
If the stronger condition $\int_{t_1}^{+\infty}\gamma(s)\|\sigma_X(s)\|_{HS}^2ds < +\infty$ holds, we even have
\begin{align*}
    \mathbb{E}(\mathcal{E}(&t,X(t),Y(t)))
    \\&\leq \frac{\gamma(t_1)}{\gamma(t)}\mathbb{E}(\mathcal{E}(t_1,X(t_1),Y(t_1))) + \frac{1}{\gamma(t)}\int_{t_1}^t\frac{\|x^*\|^2}{2}G(s)\gamma(s) ds + \frac{1}{\gamma(t)}\int_{t_1}^{+\infty}\gamma(s)\|\sigma_X(s)\|_{HS}^2ds \ \forall t\geq t_1.
\end{align*}
\end{thm}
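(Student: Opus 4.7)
The plan is to apply It\^o's formula to $\mathcal{E}(t, X(t), Y(t))$ along the trajectory of~\eqref{eq:s-TRIGS}, rearrange the resulting drift into a dissipative structure of the form $\frac{d}{dt}\mathbb{E}[\mathcal{E}] + \mu(t)\mathbb{E}[\mathcal{E}] \leq \frac{\|x^*\|^2}{2}G(t) + \frac{1}{2}\|\sigma_X\|_{HS}^2$, and then invoke the integrating factor $\gamma$ to conclude.

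First, I would compute each piece contributed by It\^o: the partial $t$-derivative, the spatial drift pieces $\langle \nabla_x \mathcal{E}, Y\rangle + \langle \nabla_y\mathcal{E}, -\delta\sqrt{\varepsilon}Y - \nabla f(X) - \varepsilon X\rangle$, and the second-order correction. Since $X$ has no diffusion and $\nabla^2_{yy}\mathcal{E} = \Id$ (coming from the $\|y\|^2$ part of $\|V\|^2$), the correction is exactly $\frac{1}{2}\|\sigma_X(t)\|_{HS}^2\, dt$. For the partial $t$-derivative, I would use the envelope identity $\nabla\varphi_t(x_{\varepsilon(t)}) = 0$, which removes the $\dot x_{\varepsilon(t)}$ contribution from the time derivative of $\varphi_t(x_{\varepsilon(t)})$ and leaves only the explicit piece $\frac{\dot\varepsilon}{2}\|x_{\varepsilon(t)}\|^2$. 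After substituting $\nabla f(X)+ \varepsilon X = \nabla\varphi_t(X)$ and $Y = V - \lambda\sqrt{\varepsilon}(X - x_{\varepsilon(t)})$ into the spatial pieces, the decisive dissipative contribution is $-\lambda\sqrt{\varepsilon}\langle\nabla\varphi_t(X), X - x_{\varepsilon(t)}\rangle + (\lambda - \delta)\sqrt{\varepsilon}\|V\|^2$ plus lower-order cross terms.

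At this point I would exploit the $\varepsilon(t)$-strong convexity of $\varphi_t$ via
$\langle \nabla\varphi_t(X), X - x_{\varepsilon(t)}\rangle \geq \varphi_t(X) - \varphi_t(x_{\varepsilon(t)}) + \frac{\varepsilon(t)}{2}\|X - x_{\varepsilon(t)}\|^2$,
which provides the $-\lambda\sqrt{\varepsilon}[\varphi_t(X) - \varphi_t(x_{\varepsilon(t)})]$ factor that, combined with $(\lambda - \delta)\sqrt{\varepsilon}\|V\|^2$ and the $-\frac{\dot\varepsilon}{2\varepsilon}$ contribution that surfaces when the $\varphi_t - \varphi_t(x_{\varepsilon(t)})$ and $\|V\|^2$ pieces are rescaled, produces precisely $-\mu(t)\mathcal{E}$. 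The residuals involving $\dot x_{\varepsilon(t)}$ and $\|x_{\varepsilon(t)}\|$ are then controlled via the classical estimates $\|x_{\varepsilon(t)}\| \leq \|x^*\|$ and $\|\dot x_{\varepsilon(t)}\| \leq \frac{|\dot\varepsilon|}{\varepsilon}\|x^*\|$ together with two Young inequalities whose free parameters are exactly $a$ and $c$ from~\eqref{eq:condition-tikhonov-parameter}, producing the coefficient $\frac{\|x^*\|^2}{2}G(t)$. Taking expectations (after a standard localization to kill the It\^o integral $\int\langle V, \sigma_X dW_X\rangle$, whose integrand is in $L^2$ thanks to the $S_\mathcal{H}^\nu$ bound from the existence theorem together with square-integrability of $\sigma_X$) yields the differential inequality
\begin{align*}
\frac{d}{dt}\mathbb{E}[\mathcal{E}(t, X(t), Y(t))] + \mu(t)\,\mathbb{E}[\mathcal{E}(t, X(t), Y(t))] \leq \frac{\|x^*\|^2}{2}G(t) + \frac{1}{2}\|\sigma_X(t)\|_{HS}^2.
\end{align*}
Multiplying by $\gamma$, integrating from $t_1$ to $t$, and dividing back by $\gamma(t)$ delivers the second (stronger) bound. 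The first bound then follows because $\mu > 0$ under~\eqref{eq:condition-tikhonov-parameter} makes $\gamma$ nondecreasing, hence $\frac{1}{\gamma(t)}\int_{t_1}^t\gamma(s)\|\sigma_X\|_{HS}^2\,ds \leq \int_{t_1}^t\|\sigma_X\|_{HS}^2\,ds \leq \int_{t_1}^{+\infty}\|\sigma_X\|_{HS}^2\,ds$.

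The main obstacle is the algebraic bookkeeping in the Young-inequality step: the free parameters must be chosen so that $\sqrt{\varepsilon}\langle V, X - x_{\varepsilon(t)}\rangle$, $\dot\varepsilon\|X\|^2$, and $\lambda\sqrt{\varepsilon}\langle V, \dot x_{\varepsilon(t)}\rangle$ can all be simultaneously reabsorbed into the strong-convexity gain $\frac{\lambda\varepsilon\sqrt{\varepsilon}}{2}\|X - x_{\varepsilon(t)}\|^2$ and $\|V\|^2$ at the prescribed rate $\mu(t)$. This is exactly what forces the two-sided constraint on $\lambda$ in~\eqref{eq:condition-tikhonov-parameter} and crystallizes the coefficient $a + c$ (rather than a single parameter) appearing in $G(t) = (a+c)\lambda\frac{\dot\varepsilon^2}{\varepsilon^{3/2}} - \dot\varepsilon$.
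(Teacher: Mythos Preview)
Your proposal is correct and follows essentially the same route as the paper: apply It\^o's formula to $\mathcal{E}$, use the $\varepsilon(t)$-strong convexity of $\varphi_t$ on the $\langle\nabla\varphi_t(X),X-x_{\varepsilon(t)}\rangle$ term, absorb the $\dot x_{\varepsilon(t)}$ cross terms via two Young inequalities with weights $a$ and $c$, bound $\|x_{\varepsilon(t)}\|$ and $\|\dot x_{\varepsilon(t)}\|$ by Lemma~\ref{lem:estimate-x-epsilon}, verify the sign of each residual coefficient using condition~\eqref{eq:condition-tikhonov-parameter}, and finish with the integrating factor $\gamma$. The only organizational difference is that the paper keeps the bookkeeping in the variables $(X-x_{\varepsilon(t)},Y)$ rather than $V$ and works pathwise before taking expectation, whereas you pass to expectation earlier; neither choice affects the argument.
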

\begin{proof}
First, observe
\begin{align*}
    &\frac{d}{dt}\mathcal{E}(t,x,y) = \frac{\dot{\varepsilon}(t)}{2}\|x\|^2 - \frac{\dot{\varepsilon}(t)}{2}\|x_{\varepsilon(t)}\|^2 + \left\langle V(t,x,y), \lambda\frac{\dot{\varepsilon}(t)}{2\sqrt{\varepsilon(t)}}(x-x_{\varepsilon(t)}) - \lambda\sqrt{\varepsilon(t)}\frac{d}{dt}x_{\varepsilon(t)}\right\rangle
    \\& \frac{d}{dx}\mathcal{E}(t,x,y) = \nabla f(x) + \varepsilon(t)x + \lambda^2\varepsilon(t)(x-x_{\varepsilon(t)}) + \lambda\sqrt{\varepsilon(t)}y
    \\&\frac{d}{dy}\mathcal{E}(t,x,y) = \lambda\sqrt{\varepsilon(t)}(x-x_{\varepsilon(t)}) + y
    \\&\frac{d^2}{dy^2}\mathcal{E}(t,x,y) = I.
\end{align*}
Note that we can apply It\^{o}'s formula: The condition ``bounded on bounded subsets of $[t_0,+\infty)\times\mathcal{H}\times\mathcal{H}$" holds since, $x^*$ being a minimizer of $f$, the Lipschitz continuity of $\nabla f$ yields
\begin{align*}
    \|\nabla f(x)\| = \|\nabla f(x) - \nabla f(x^*)\| \leq L\|x-x^*\|,
\end{align*}
which is bounded on bounded subsets of $\mathcal{H}$. Furthermore,
\begin{align*}
    \left\|\frac{d}{dt}x_{\varepsilon(t)}\right\| \leq -\frac{\dot{\varepsilon}(t)}{\varepsilon(t)}\|x_{\varepsilon(t)}\| \leq-\frac{\dot{\varepsilon}(t)}{\varepsilon(t)}\|x^*\|
\end{align*}
is also bounded. Concerning continuity, the only aspect that is not immediate is continuity of $t\mapsto \frac{d}{dt}x_{\varepsilon(t)}$. However, applying the implicit function theorem for $F(t,x) = \nabla f(x) + \varepsilon(t)x$ yields the existence of a continuously differentiable $x_{\varepsilon(t)}$ such that $F(t,x_{\varepsilon(t)}) = 0$ and $F(t,x) = 0 \Leftrightarrow x = x_{\varepsilon(t)}$ (in order to obtain $x_{\varepsilon(t)}$ on the entire set of $[t_0,+\infty)$, observe that the conditions for the implicit function theorem are satisfied for arbitrary $t\in[t_0,+\infty)$ and $x\in\mathcal{H}$, apply the implicit function theorem repeatedly and use uniqueness of $x_{\varepsilon(t)}$ whenever the environments overlap to show that $x_{\varepsilon(t)}$ can be extended to the whole of $[t_0,+\infty)$).

Therefore, by It\^{o}'s formula,
\begin{align*}
    d\mathcal{E}(&t,X(t),Y(t)) =
    \\&\Bigg(\frac{\dot{\varepsilon}(t)}{2}\|X(t)\|^2 - \frac{\dot{\varepsilon}(t)}{2}\|x_{\varepsilon(t)}\|^2 + \left\langle V(t,X(t),Y(t)), \lambda\frac{\dot{\varepsilon}(t)}{2\sqrt{\varepsilon(t)}}(X(t)-x_{\varepsilon(t)}) - \lambda\sqrt{\varepsilon(t)}\frac{d}{dt}x_{\varepsilon(t)}\right\rangle
    \\&\qquad + \langle \nabla f(X(t)) + \varepsilon(t)X(t) + \lambda^2\varepsilon(t)(X(t)-x_{\varepsilon(t)}) + \lambda\sqrt{\varepsilon(t)}Y(t), Y(t)\rangle
    \\&\qquad+ \langle \lambda\sqrt{\varepsilon(t)}(X(t)-x_{\varepsilon(t)}) + Y(t), -\delta \sqrt{\varepsilon(t)}Y(t) - \nabla f(X(t)) - \varepsilon(t)X(t)\rangle\Bigg) dt
    \\&\quad + \langle \lambda\sqrt{\varepsilon(t)}(X(t)-x_{\varepsilon(t)}) + Y(t), \sigma_X(t) dW_X(t)\rangle + \frac{1}{2}\trace(\sigma_X(t)\sigma_X(t)^*)dt
    \\&=\Bigg(\frac{\dot{\varepsilon}(t)}{2}\|X(t)\|^2 - \frac{\dot{\varepsilon}(t)}{2}\|x_{\varepsilon(t)}\|^2
    \\&\qquad+ \left\langle \lambda\sqrt{\varepsilon(t)}(X(t)-x_{\varepsilon(t)}) + Y(t), \lambda\frac{\dot{\varepsilon}(t)}{2\sqrt{\varepsilon(t)}}(X(t)-x_{\varepsilon(t)}) - \lambda\sqrt{\varepsilon(t)}\frac{d}{dt}x_{\varepsilon(t)}\right\rangle
    \\&\qquad + \langle \nabla f(X(t)) + \varepsilon(t)X(t) + \lambda^2\varepsilon(t)(X(t)-x_{\varepsilon(t)}) + \lambda\sqrt{\varepsilon(t)}Y(t), Y(t)\rangle
    \\&\qquad+ \langle \lambda\sqrt{\varepsilon(t)}(X(t)-x_{\varepsilon(t)}) + Y(t), -\delta \sqrt{\varepsilon(t)}Y(t) - \nabla f(X(t)) - \varepsilon(t)X(t)\rangle \Bigg) dt
    \\&\quad + \langle \lambda\sqrt{\varepsilon(t)}(X(t)-x_{\varepsilon(t)}) + Y(t), \sigma_X(t) dW_X(t)\rangle + \frac{1}{2}\trace(\sigma_X(t)\sigma_X(t)^*)dt
    \\&=\Bigg(\frac{\dot{\varepsilon}(t)}{2}\|X(t)\|^2 - \frac{\dot{\varepsilon}(t)}{2}\|x_{\varepsilon(t)}\|^2 + \frac{\lambda^2}{2}\dot{\varepsilon}(t)\|X(t)-x_{\varepsilon(t)}\|^2
    \\&\qquad+ \lambda\left(\frac{\dot{\varepsilon}(t)}{2\sqrt{\varepsilon(t)}} + (\lambda-\delta)\varepsilon(t)\right)\langle X(t)-x_{\varepsilon(t)}, Y(t)\rangle + (\lambda-\delta)\sqrt{\varepsilon(t)}\|Y(t)\|^2
    \\&\qquad+ \lambda\underbrace{\left(- \sqrt{\varepsilon(t)}\right)\langle \nabla f(X(t)) + \varepsilon(t)X(t), X(t)-x_{\varepsilon(t)}\rangle}_{=:D_0(t)}
    \\&\qquad- \lambda^2\varepsilon(t)\left\langle \frac{d}{dt}x_{\varepsilon(t)}, X(t)-x_{\varepsilon(t)}\right\rangle - \lambda\sqrt{\varepsilon(t)}\left\langle \frac{d}{dt}x_{\varepsilon(t)}, Y(t)\right\rangle\Bigg) dt
    \\&\quad + \langle \lambda\sqrt{\varepsilon(t)}(X(t)-x_{\varepsilon(t)}) + Y(t), \sigma_X(t) dW_X(t)\rangle + \frac{1}{2}\trace(\sigma_X(t)\sigma_X(t)^*)dt.
\end{align*}
Using that $\varphi_t$ is $\varepsilon(t)$-strongly convex, we have
\begin{align*}
    \varphi_t(x_{\varepsilon(t)}) - \varphi_t(X(t)) \geq \langle \nabla \varphi_t(X(t)), x_{\varepsilon(t)} - X(t)\rangle + \frac{\varepsilon(t)}{2}\|X(t) - x_{\varepsilon(t)}\|^2.
\end{align*}
Since $\varepsilon$ is nonincreasing, so $\dot{\varepsilon}(t)\leq 0$, we can incorporate the above estimation into $D_0(t)$ to obtain
\begin{align}\label{eq:estimate-D0}
    D_0(t)\leq \left( - \sqrt{\varepsilon(t)}\right)(\varphi_t(X(t)) - \varphi_t(x_{\varepsilon(t)})) + \frac{1}{2}\left(- \sqrt{\varepsilon(t)}\right)\varepsilon(t)\|X(t)-x_{\varepsilon(t)}\|^2.
\end{align}
Next, introduce $a>1$ and $b:=\frac{c}{2}\lambda >0$, where $c>2$ (they will be adjusted to suitable values later) and observe
\begin{align}
    &-\lambda\sqrt{\varepsilon(t)}\left\langle \frac{d}{dt}x_{\varepsilon(t)}, Y(t)\right\rangle \leq \frac{\lambda\sqrt{\varepsilon(t)}}{2a}\|Y(t)\|^2 + \frac{a\lambda\sqrt{\varepsilon(t)}}{2}\left\|\frac{d}{dt}x_{\varepsilon(t)}\right\|^2 \ \mbox{and} \label{eq:introduce-a}
    \\&-\lambda^2\varepsilon(t)\left\langle X(t)-x_{\varepsilon(t)}, \frac{d}{dt}x_{\varepsilon(t)}\right\rangle \leq \frac{c\lambda\sqrt{\varepsilon(t)}}{2}\left\|\frac{d}{dt}x_{\varepsilon(t)}\right\|^2 + \frac{\lambda^3\varepsilon(t)^\frac{3}{2}}{2c}\|X(t)-x_{\varepsilon(t)}\|^2.\label{eq:introduce-c}
\end{align}
Next, combine~\eqref{eq:introduce-a}-\eqref{eq:introduce-c} with the above expression for $d\mathcal{E}(t,X(t),Y(t))$. Then,
\begin{align}\label{eq:estimate-d-energy}
    \begin{split}
    d\mathcal{E}(&t,X(t),Y(t)) \leq
    \\&\Bigg(\frac{\dot{\varepsilon}(t)}{2}\|X(t)\|^2 - \frac{\dot{\varepsilon}(t)}{2}\|x_{\varepsilon(t)}\|^2 + \frac{\lambda^2}{2}\dot{\varepsilon}(t)\|X(t)-x_{\varepsilon(t)}\|^2
    \\&\qquad+ \lambda\left(\frac{\dot{\varepsilon}(t)}{2\sqrt{\varepsilon(t)}} + (\lambda-\delta)\varepsilon(t)\right)\langle X(t)-x_{\varepsilon(t)}, Y(t)\rangle + (\lambda-\delta)\sqrt{\varepsilon(t)}\|Y(t)\|^2
    \\&\qquad+ \lambda\left( - \sqrt{\varepsilon(t)}\right)(\varphi_t(X(t)) - \varphi_t(x_{\varepsilon(t)})) + \frac{\lambda}{2}\left(-\sqrt{\varepsilon(t)}\right)\varepsilon(t)\|X(t)-x_{\varepsilon(t)}\|^2
    \\&\qquad + \frac{a\lambda\sqrt{\varepsilon(t)}}{2}\left\|\frac{d}{dt}x_{\varepsilon(t)}\right\|^2 +\frac{c\lambda\sqrt{\varepsilon(t)}}{2}\left\|\frac{d}{dt}x_{\varepsilon(t)}\right\|^2 + \frac{\lambda^3\varepsilon(t)^\frac{3}{2}}{2c}\|X(t)-x_{\varepsilon(t)}\|^2 + \frac{\lambda\sqrt{\varepsilon(t)}}{2a}\|Y(t)\|^2 \Bigg) dt
    \\&\quad + \langle \lambda\sqrt{\varepsilon(t)}(X(t)-x_{\varepsilon(t)}) + Y(t) , \sigma_X(t) dW_X(t)\rangle + \frac{1}{2}\trace(\sigma_X(t)\sigma_X(t)^*)dt
    \\&\leq \Bigg(\frac{1}{2}\dot{\varepsilon}(t)\|X(t)\|^2 -\frac{1}{2}\dot{\varepsilon}(t)\|x_{\varepsilon(t)}\|^2 + \lambda\left(\frac{\dot{\varepsilon}(t)}{2\sqrt{\varepsilon(t)}} + (\lambda-\delta)\sqrt{\varepsilon(t)}\right)\langle X(t)-x_{\varepsilon(t)}, Y(t)\rangle
    \\&\qquad- \lambda\sqrt{\varepsilon(t)}(\varphi_t(X(t)) - \varphi_t(x_{\varepsilon(t)})) + \left(\frac{\lambda^2}{2}\dot{\varepsilon}(t) + \frac{\lambda^3\varepsilon(t)^\frac{3}{2}}{2c} - \frac{\lambda\varepsilon(t)^{\frac{3}{2}}}{2}\right)\|X(t)-x_{\varepsilon(t)}\|^2
    \\&\qquad+ \frac{1}{2}\left(\left(2+\frac{1}{a}\right)\lambda-2\delta\right)\sqrt{\varepsilon(t)}\|Y(t)\|^2 + \frac{1}{2}(a+c)\lambda\sqrt{\varepsilon(t)}\left\|\frac{d}{dt}x_{\varepsilon(t)}\right\|^2\Bigg)dt
    \\&\quad + \langle \lambda\sqrt{\varepsilon(t)}(X(t)-x_{\varepsilon(t)}) + Y(t), \sigma_X(t) dW_X(t)\rangle + \frac{1}{2}\trace(\sigma_X(t)\sigma_X(t)^*)dt
    \end{split}
\end{align}
In the following, we will be employing $d\mathcal{E}(t,X(t),Y(t))$ as a shorthand for the RHS of~\eqref{eq:estimate-d-energy}. Now, we will turn to majorizing the expression $d\mathcal{E}(t,X(t),Y(t)) + \mu(t)\mathcal{E}(t,X(t),Y(t))dt$ with the aim of using a Gronwall-type result to arrive at our original claim. Begin with
\begin{align*}
    d\mathcal{E}(&t,X(t),Y(t)) + \mu(t)\mathcal{E}(t,X(t),Y(t))dt
    \\&\leq \Bigg(\sqrt{\varepsilon(t)}\left(-\frac{\dot{\varepsilon}(t)}{2\varepsilon(t)^\frac{3}{2}}+\delta-2\lambda\right)(\varphi_t(X(t))-\varphi_t(x_{\varepsilon(t)})) + \frac{1}{2}\dot{\varepsilon}(t)\|X(t)\|^2 - \frac{1}{2}\dot{\varepsilon}(t)\|x_{\varepsilon(t)}\|^2
    \\&\qquad + \left(\underbrace{\frac{\lambda^2}{4}\dot{\varepsilon}(t) + \frac{\lambda^3\varepsilon(t)^\frac{3}{2}}{2c} - \frac{\lambda\varepsilon(t)^\frac{3}{2}}{2} + \frac{(\delta-\lambda)\lambda^2\varepsilon(t)^\frac{3}{2}}{2}}_{\leq \frac{\lambda^3\varepsilon(t)^\frac{3}{2}}{2c} - \frac{\lambda\varepsilon(t)^\frac{3}{2}}{2} + \frac{(\delta-\lambda)\lambda^2\varepsilon(t)^\frac{3}{2}}{2}}\right)\|X(t)-x_{\varepsilon(t)}\|^2
    \\&\qquad+ \frac{1}{2}\left(\left(1+\frac{1}{a}\right)\lambda-\delta\right)\sqrt{\varepsilon(t)}\|Y(t)\|^2 - \frac{\dot{\varepsilon}(t)}{4\varepsilon(t)}\|Y(t)\|^2 + \frac{1}{2}\left(a+c\right)\lambda\sqrt{\varepsilon(t)}\left\|\frac{d}{dt}x_{\varepsilon(t)}\right\|^2\Bigg)dt
    \\&\quad + \langle \lambda\sqrt{\varepsilon(t)}(X(t)-x_{\varepsilon(t)}) + Y(t), \sigma_X(t) dW_X(t)\rangle + \frac{1}{2}\trace(\sigma_X(t)\sigma_X(t)^*)dt,
\end{align*}
which again uses that fact that $\dot{\varepsilon}(t)\leq 0$ for all $t\geq t_0$. This yields
\begin{align*}
    d\mathcal{E}(&t,X(t),Y(t)) + \mu(t)\mathcal{E}(t,X(t),Y(t))dt
    \\&\leq \Bigg(\sqrt{\varepsilon(t)}\left(-\frac{\dot{\varepsilon}(t)}{2\varepsilon(t)^\frac{3}{2}} + \delta-2\lambda \right)(\varphi_t(X(t))-\varphi(x_{\varepsilon(t)})) + \frac{1}{2}\dot{\varepsilon}(t)\|X(t)\|^2 - \frac{1}{2}\dot{\varepsilon}(t))\|x_{\varepsilon(t)}\|^2
    \\&\qquad+ \frac{\lambda \varepsilon(t)^\frac{3}{2}}{2}\left(\delta\lambda-\lambda^2-1 + \frac{\lambda}{c}\right)\|X(t)-x_{\varepsilon(t)}\|^2
    \\&\qquad+ \left(\frac{1}{2}\left(\left(1+\frac{1}{a}\right)\lambda-\delta\right)\sqrt{\varepsilon(t)}-\frac{\dot{\varepsilon}(t)}{4\varepsilon(t)}\right)\|Y(t)\|^2 + \frac{1}{2}\left(a+c\right)\lambda\sqrt{\varepsilon(t)}\left\|\frac{d}{dt}x_{\varepsilon(t)}\right\|^2\Bigg)dt
    \\&\quad + \langle \lambda\sqrt{\varepsilon(t)}(X(t)-x_{\varepsilon(t)}) + Y(t), \sigma_X(t) dW_X(t)\rangle + \frac{1}{2}\trace(\sigma_X(t)\sigma_X(t)^*)dt.
\end{align*}
By Lemma~\ref{lem:estimate-x-epsilon},
\begin{align*}
    \left\|\frac{d}{dt}x_{\varepsilon(t)}\right\|^2\leq \frac{\dot{\varepsilon}(t)^2}{\varepsilon(t)^2}\|x_{\varepsilon(t)}\|^2\leq \frac{\dot{\varepsilon}(t)^2}{\varepsilon(t)^2}\|x^*\|^2,
\end{align*}
and thus
\begin{align}\label{eq:estimate-what-will-be-in-gronwall}
\begin{split}
    d\mathcal{E}(&t,X(t),Y(t))+\mu(t)\mathcal{E}(t,X(t),Y(t))dt
    \\&\leq \Bigg(\sqrt{\varepsilon(t)}\left(-\frac{\dot{\varepsilon}(t)}{2\varepsilon(t)^\frac{3}{2}} + \delta-2\lambda \right)(\varphi_t(X(t))-\varphi_t(x_{\varepsilon(t)}))
    \\&\qquad+ \frac{1}{2}\dot{\varepsilon}(t)\|X(t)\|^2 +\frac{\lambda \varepsilon(t)^\frac{3}{2}}{2}\left(\delta\lambda-\lambda^2-1 + \frac{\lambda}{c} \right)\|X(t)-x_{\varepsilon(t)}\|^2
    \\&\qquad+\left(\frac{1}{2}\left(\left(1+\frac{1}{a}\right)\lambda-\delta\right)\sqrt{\varepsilon(t)}-\frac{\dot{\varepsilon}(t)}{2\varepsilon(t)}\right)\|Y(t)\|^2
    \\&\qquad+ \frac{1}{2}\left(\left(a+c\right)\lambda\frac{\dot{\varepsilon}(t)^2}{\varepsilon(t)^\frac{3}{2}}-\dot{\varepsilon}(t)\right)\|x_{\varepsilon(t)}\|^2\Bigg)dt
    \\&\quad + \langle \lambda\sqrt{\varepsilon(t)}(X(t)-x_{\varepsilon(t)}) + Y(t), \sigma_X(t) dW_X(t)\rangle + \frac{1}{2}\trace(\sigma_X(t)\sigma_X(t)^*) dt.
\end{split}
\end{align}
In the next step, examine the signs of the coefficients in~\eqref{eq:estimate-what-will-be-in-gronwall}.
\begin{align}\label{eq:estimate-what-will-be-in-gronwall-with-coeff-names}
\begin{split}
    d\mathcal{E}(&t,X(t),Y(t)) + \mu(t)\mathcal{E}(t,X(t),Y(t)) dt
    \\&\leq \Bigg(\sqrt{\varepsilon(t)}\left(\underbrace{-\frac{\dot{\varepsilon}(t)}{2\varepsilon(t)^\frac{3}{2}} + \delta-2\lambda}_{=:A} \right)(\varphi_t(X(t)) - \varphi_t(x_{\varepsilon(t)}))
    \\&\qquad+ \frac{1}{2}\left(\underbrace{\dot{\varepsilon}(t)}_{\leq 0}\right)\|X(t)\|^2 + \frac{\lambda}{2}\left(\underbrace{\left(\delta+\frac{1}{c}\right)\lambda - \lambda^2-1}_{=:C}\right)\varepsilon(t)^\frac{3}{2} \|X(t)-x_{\varepsilon(t)}\|^2
    \\&\qquad+\left(\underbrace{\frac{1}{2}\left(\left(1+\frac{1}{a}\right)\lambda-\delta\right)\sqrt{\varepsilon(t)}-\frac{\dot{\varepsilon}(t)}{2\varepsilon(t)}}_{=:D(t)}\right)\|Y(t)\|^2 + \frac{1}{2}\left((a+c)\lambda\frac{\dot{\varepsilon}(t)^2}{\varepsilon(t)^\frac{3}{2}} - \dot{\varepsilon}(t) \right)\|x_{\varepsilon(t)}\|^2\Bigg)dt
    \\&\quad + \langle \lambda\sqrt{\varepsilon(t)}(X(t)-x_{\varepsilon(t)}) + Y(t), \sigma_X(t) dW_X(t)\rangle + \frac{1}{2}\trace(\sigma_X(t)\sigma_X(t)^*)dt.
\end{split}
\end{align}
Now, by~\eqref{eq:condition-tikhonov-parameter},
\begin{align*}
    \frac{d}{dt}\left(\frac{1}{\sqrt{\varepsilon(t)}}\right)\leq 2\lambda - \delta \ \mbox{and} \ \frac{d}{dt}\left(\frac{1}{\sqrt{\varepsilon(t)}}\right)\leq \frac{1}{2}\left(\delta-\left(1+\frac{1}{a}\right)\lambda\right).
\end{align*}
Thus,
\begin{align*}
    A = -\frac{\dot{\varepsilon}(t)}{2\varepsilon(t)^\frac{3}{2}} + \delta-2\lambda = \frac{d}{dt}\left(\frac{1}{\sqrt{\varepsilon(t)}}\right) + \delta-2\lambda \leq 0.
\end{align*}
In the case where $\delta \leq \sqrt{2}-\frac{1}{c}$, we have
\begin{align*}
    \left(\delta+\frac{1}{c}\right)\lambda - \lambda^2 - 1 \leq \sqrt{2}\lambda - \lambda^2 - 1 = -\left(\frac{1}{\sqrt{2}}\lambda-1\right)^2 - \frac{1}{2}\lambda^2 \leq 0,
\end{align*}
and if $\delta > \sqrt{2} - \frac{1}{c}$, we have
\begin{align*}
    \left(\delta+\frac{1}{c}\right)\lambda - \lambda^2 - \frac{1}{2} \leq 0, \ \mbox{because} \ \lambda \geq \frac{1}{2}\left(\delta+\frac{1}{c} + \sqrt{\left(\delta+\frac{1}{c}\right)^2-2}\right),
\end{align*}
from which we obtain that
\begin{align*}
    C = \left(\delta+\frac{1}{c}\right)\lambda - \lambda^2 - 1\leq 0.
\end{align*}
Again applying condition~\eqref{eq:condition-tikhonov-parameter}, it follows
\begin{align*}
    D(t) = \frac{1}{2}\left(\left(1+\frac{1}{a}\right)\lambda-\delta\right) - \frac{1}{2}\frac{\dot{\varepsilon}(t)}{\varepsilon(t)^\frac{3}{2}} = \frac{d}{dt}\left(\frac{1}{\sqrt{\varepsilon(t)}}\right) + \frac{1}{2}\left(\left(1+\frac{1}{a}\right)\lambda-\delta\right)\leq 0.
\end{align*}
Thus,~\eqref{eq:estimate-what-will-be-in-gronwall-with-coeff-names} implies
\begin{align*}
    d\mathcal{E}(&t,X(t),Y(t)) + \mu(t)\mathcal{E}(t,X(t),Y(t))dt 
    \\&\leq \left(\frac{1}{2}\underbrace{\left((a+c)\lambda\frac{\dot{\varepsilon}(t)^2}{\varepsilon(t)^\frac{3}{2}} - \dot{\varepsilon}(t) \right)}_{=:G(t)}\|x_{\varepsilon(t)}\|^2\right)dt
    \\&\quad + \langle \lambda\sqrt{\varepsilon(t)}(X(t)-x_{\varepsilon(t)}) + Y(t), \sigma_X(t) dW_X(t)\rangle + \frac{1}{2}\trace(\sigma_X(t)\sigma_X(t)^*) dt.
\end{align*}
Since now $\|x_{\varepsilon(t)}\|\leq\|x^*\|$, it follows
\begin{align}\label{eq:estimate-what-will-be-in-gronwall-condensed}
\begin{split}
    d\mathcal{E}(&t,X(t),Y(t)) + \mu(t)\mathcal{E}(t,X(t),Y(t))dt
    \\&\leq \left(\frac{\|x^*\|^2}{2}G(t) \right)dt + \langle \lambda\sqrt{\varepsilon(t)}(X(t)-x_{\varepsilon(t)}) + Y(t), \sigma_X(t) dW_X(t)\rangle + \frac{1}{2}\trace(\sigma_X(t)\sigma_X(t)^*)dt.
\end{split}
\end{align}
We will use a stochastic version of the Gronwall approach, so define
\begin{align*}
    R(t,x,y) = \exp\left(\int_{t_1}^t \mu(s) ds\right)\mathcal{E}(t,x,y).
\end{align*}
Then, calling $\gamma(t):=\exp\left(\int_{t_1}^t \mu(s)ds\right)$,
\begin{align*}
    &\frac{d}{dt} R(t,x,y) = \mu(t)\gamma(t)\mathcal{E}(t,x,y) + \gamma(t)\frac{d}{dt}\mathcal{E}(t,x,y)
    \\& \frac{d}{dx}R(t,x,y) = \gamma(t)\frac{d}{dx}\mathcal{E}(t,x,y)
    \\&\frac{d}{dy}R(t,x,y) = \gamma(t)\frac{d}{dy}\mathcal{E}(t,x,y)
\end{align*}
Hence, we have, using $\frac{\lambda}{\delta}-1 \leq 0$.
\begin{align}\label{eq:gronwall-derivative-of-exponential}
\begin{split}
    dR(t,X(t),Y(t)) &= \mu(t)\gamma(t)\mathcal{E}(t,X(t),Y(t)) dt + \gamma(t) d\mathcal{E}(t,X(t),Y(t)).
    \\&\leq \frac{\|x^*\|^2}{2}G(t)\gamma(t) dt 
    \\&\quad + \gamma(t)\langle \lambda\sqrt{\varepsilon(t)}(X(t)-x_{\varepsilon(t)}) + Y(t), \sigma_X(t) dW_X(t)\rangle + \gamma(t)\frac{1}{2}\trace(\sigma_X(t)\sigma_X(t)^*) dt,
\end{split}
\end{align}
so  by taking the difference of the left and right hand sides of the inequality in~\eqref{eq:gronwall-derivative-of-exponential}, we obtain $\Delta(t)\geq 0$ such that
\begin{align}\label{eq:energy-with-exponential-equality}
\begin{split}
    R(&t,X(t),Y(t)) - R(t_1,X(t_1),Y(t_1))
    \\&= \int_{t_1}^{t}\left(\frac{\|x^*\|^2}{2}G(s)\gamma(s) - \Delta(s)\right)ds
    \\&\quad + \int_{t_1}^{t}\gamma(s)\langle \lambda\sqrt{\varepsilon(s)}(X(s)-x_{\varepsilon(s)}) + Y(s) , \sigma_X(s) dW_X(s)\rangle + \int_{t_1}^{t}\gamma(s)\frac{1}{2}\trace(\sigma_X(s)\sigma_X(s)^*) ds.
\end{split}
\end{align}
Since $R(t,X(t),Y(t)) = \gamma(t)\mathcal{E}(t,X(t),Y(t))$, this means
\begin{align*}
    \mathcal{E}(&t,X(t),Y(t))
    \\&\leq \frac{\gamma(t_1)}{\gamma(t)}\mathcal{E}(t_1,X(t_1),Y(t_1)) + \frac{1}{\gamma(t)}\int_{t_1}^t \frac{\|x^*\|^2}{2}G(s)\gamma(s) ds
    \\&\quad + \frac{1}{\gamma(t)}\int_{t_1}^{t}\gamma(s)\langle \lambda\sqrt{\varepsilon(s)}(X(s)-x_{\varepsilon(s)}) + Y(s) , \sigma_X(s) dW_X(s)\rangle + \frac{1}{\gamma(t)}\int_{t_1}^{t}\gamma(s)\frac{1}{2}\trace(\sigma_X(s)\sigma_X(s)^*) ds.
\end{align*}
Consider
\begin{align*}
    \mathbb{E}&\left(\int_{t_1}^T \|\gamma(s)\sigma(s)^*(\lambda\sqrt{\varepsilon(s)}(X(s)-x_{\varepsilon(s)}) + Y(s))\|^2ds\right)
    \\&\leq 2\gamma(T)\int_{t_1}^T\mathbb{E}\left((\lambda\sqrt{\varepsilon(s)})^2\|X(s)-x_{\varepsilon(s)}\|^2 + \|Y(s)\|^2\right) \|\sigma(s)\|^2ds
    \\&\leq 2\gamma(T)\int_{t_1}^T\mathbb{E}\left( \left(2\left(\lambda\sqrt{\varepsilon(s)}\right)^2 + L^2\right)\|X(s)\|^2 + 2(\lambda\sqrt{\varepsilon(s)})^2\|x_{\varepsilon(s)}\|^2+\|Y(s)\|^2\right) \|\sigma(s)\|^2ds
    \\&\leq 2\gamma(T)\max\Bigg(\sup_{t\in[t_1,T]}\left(2(\lambda^2\varepsilon(t) + L^2\right)\mathbb{E}\left(\sup_{t\in[t_1,T]}\|X(t)\|^2\right), 2\sup_{t\in[t_1,T]}\left((\lambda\sqrt{\varepsilon(t)})^2\|x_{\varepsilon(t)}\|^2\right),
    \\&\qquad\qquad\qquad \mathbb{E}\left(\sup_{t\in[t_1,T]}\|Y(t)\|^2\right)\Bigg) \int_{t_1}^{T} \|\sigma(s)\|^2ds < +\infty.
\end{align*}
Therefore, 
\begin{align*}
    &\frac{1}{\gamma(t)}\int_{t_1}^{t} \gamma(s) \langle \lambda\sqrt{\varepsilon(s)}(X(s)-x_{\varepsilon(s)}) + Y(s), \sigma_X(s) dW_X(s)\rangle
\end{align*}
is a square-integrable martingale with expected value 0. Using this as well as the square-integrability of $\sigma$, we obtain
\begin{align*}
\begin{split}
    \mathbb{E}(\mathcal{E}(&t,X(t),Y(t)))
    \\&\leq \frac{\gamma(t_1)}{\gamma(t)}\mathbb{E}(\mathcal{E}(t_1,X(t_1),Y(t_1))) + \frac{1}{\gamma(t)}\int_{t_1}^t\frac{\|x^*\|^2}{2}G(s)\gamma(s) ds + \int_{t_1}^{+\infty}\|\sigma(s)\|^2ds,
\end{split}
\end{align*}
or in the case where $\int_{t_1}^{+\infty} \gamma(s)\|\sigma_X(s)\|^2 < +\infty$,
\begin{align*}
    \mathbb{E}(\mathcal{E}(&t,X(t),Y(t)))
    \\&\leq \frac{\gamma(t_1)}{\gamma(t)}\mathbb{E}(\mathcal{E}(t_1,X(t_1),Y(t_1))) + \frac{1}{\gamma(t)}\int_{t_1}^t\frac{\|x^*\|^2}{2}G(s)\gamma(s) ds + \frac{1}{\gamma(t)}\int_{t_1}^{+\infty}\gamma(s)\|\sigma(s)\|^2ds,
\end{align*}
as desired.
\end{proof}
\begin{rmk}
    Note that in this proof, we used It\^{o}'s formula while not checking the requirements for all of $\nabla_{(x,y)} \mathcal{E}(t,(x,y))$ and $\nabla_{(x,y)}^2\mathcal{E}(t,(x,y))$ but only for those components that are not multiplied by $0$ in the integral representation of $\mathcal{E}(t,X(t),Y(t))$ -- in the proof of It\^{o}'s formula, the assumptions are only used for precisely these components, so it is sufficient to verify the requirements only on them. Incidentally, $f\in\mathcal{C}^1$ would therefore be enough for these necessary conditions of It\^{o}'s formula, however, if the system did not have the property that $dX(t)$ has diffusion term $0$, we would need $f\in\mathcal{C}^2$ here as well, since $\frac{d^2}{dx^2}\mathcal{E}(t,x,y)$ contains $\nabla^2 f(x)$ which would then need to be continuous and bounded on bounded sets. In the current setup, the need for $f\in\mathcal{C}^2$ comes from the fact that $\frac{d}{dt}x_{\varepsilon(t)}$ has to be continuous in order for us to apply It\^{o}'s formula.
\end{rmk}
Before we move to the next section, consider the adaptation of Lemma 1 from~\cite{attouch-balhag-chbani-riahi} to the stochastic world.
\begin{lem}\label{lem:for-the-rates}
    Let $X:[t_0,+\infty)\rightarrow \mathcal{H}$ be a solution trajectory of~\eqref{eq:s-TRIGS}, $x^*$ be the minimum norm minimizer of $f$ and $\mathcal{E}$ be the energy function defined in~\eqref{eq:def-energy-function}. Then, the following estimates are satisfied:
    \begin{align*}
        &f(X(t))-\inf_\mathcal{H} f \leq \mathcal{E}(t,X(t),Y(t)) + \frac{\varepsilon(t)}{2}\|x^*\|^2,
        \\& \|X(t)-x_{\varepsilon(t)}\|^2\leq \frac{2\mathcal{E}(t,X(t),Y(t))}{\varepsilon(t)}.
    \end{align*}
    Therefore, $X(t)$ converges strongly to $x^*$ almost surely if $\lim_{t\rightarrow+\infty}\frac{\mathcal{E}(t,X(t),Y(t))}{\varepsilon(t)}=0$.
\end{lem}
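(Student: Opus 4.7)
The plan is to unpack the definition $\mathcal{E}(t,x,y) = \varphi_t(x) - \varphi_t(x_{\varepsilon(t)}) + \frac{1}{2}\|V(t,x,y)\|^2$ directly. Since $\|V(t,X(t),Y(t))\|^2 \geq 0$, the key inequality to exploit repeatedly is
\begin{align*}
\varphi_t(X(t)) - \varphi_t(x_{\varepsilon(t)}) \leq \mathcal{E}(t,X(t),Y(t)).
\end{align*}

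For the first bound, I would write $f(X(t)) = \varphi_t(X(t)) - \frac{\varepsilon(t)}{2}\|X(t)\|^2$ and $f(x^*) = \varphi_t(x^*) - \frac{\varepsilon(t)}{2}\|x^*\|^2$, then use that $x_{\varepsilon(t)}$ minimizes $\varphi_t$ to get $\varphi_t(x_{\varepsilon(t)}) \leq \varphi_t(x^*)$. Subtracting yields
\begin{align*}
f(X(t)) - f(x^*) \leq \varphi_t(X(t)) - \varphi_t(x_{\varepsilon(t)}) + \tfrac{\varepsilon(t)}{2}\|x^*\|^2 - \tfrac{\varepsilon(t)}{2}\|X(t)\|^2,
\end{align*}
and dropping the nonpositive term $-\frac{\varepsilon(t)}{2}\|X(t)\|^2$ together with the displayed inequality above gives the claim.

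For the second bound I would use that $\varphi_t$ is $\varepsilon(t)$-strongly convex (since $f$ is convex and we added $\frac{\varepsilon(t)}{2}\|\cdot\|^2$) and that $x_{\varepsilon(t)}$ is its minimizer, so $\nabla\varphi_t(x_{\varepsilon(t)}) = 0$ and therefore
\begin{align*}
\varphi_t(X(t)) - \varphi_t(x_{\varepsilon(t)}) \geq \tfrac{\varepsilon(t)}{2}\|X(t) - x_{\varepsilon(t)}\|^2.
\end{align*}
Since $\mathcal{E}(t,X(t),Y(t)) \geq \varphi_t(X(t)) - \varphi_t(x_{\varepsilon(t)})$, rearranging yields the claimed estimate on $\|X(t)-x_{\varepsilon(t)}\|^2$.

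For the final strong convergence statement I would combine the second bound with the classical Tikhonov fact that $x_{\varepsilon(t)} \to x^*$ strongly as $t \to +\infty$ (which follows from $\varepsilon(t) \to 0$ and the characterization of $x^*$ as the minimum-norm element of $S$). Writing $\|X(t) - x^*\| \leq \|X(t) - x_{\varepsilon(t)}\| + \|x_{\varepsilon(t)} - x^*\|$, the first summand is dominated pathwise by $\sqrt{2\mathcal{E}(t,X(t),Y(t))/\varepsilon(t)}$, which tends to zero by hypothesis, while the second is deterministic and tends to zero. Since the a.s. hypothesis applies to each path in a set of full measure, both terms vanish almost surely, yielding the conclusion. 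The main (modest) obstacle is simply ensuring the bookkeeping between $\varphi_t$ and $f$ terms is clean; none of the steps requires the SDE structure beyond having $X(t)$ well defined as a process.
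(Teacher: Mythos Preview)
Your proposal is correct and follows essentially the same route as the paper: both arguments hinge on the key inequality $\varphi_t(X(t))-\varphi_t(x_{\varepsilon(t)})\leq\mathcal{E}(t,X(t),Y(t))$, combine it with $\varphi_t(x_{\varepsilon(t)})\leq\varphi_t(x^*)$ for the first bound and with $\varepsilon(t)$-strong convexity of $\varphi_t$ at its minimizer for the second. Your treatment of the final strong convergence via the triangle inequality and $x_{\varepsilon(t)}\to x^*$ is in fact more explicit than the paper, which leaves that step to the reader.
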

\begin{proof}
    By the definition of $\varphi_t$, we have
    \begin{align*}
        f(X(t))-\inf_\mathcal{H} f &= \varphi_t(X(t))-\varphi_t(x^*) + \frac{\varepsilon(t)}{2}(\|x^*\|^2-\|X(t)\|^2)
        \\&= (\varphi_t(X(t)) - \varphi_t(x_{\varepsilon(t)})) + \underbrace{(\varphi_t(x_{\varepsilon(t)}) - \varphi_t(x^*))}_{\leq 0} + \frac{\varepsilon(t)}{2}(\|x^*\|^2-\|X(t)\|^2)
        \\&\leq \varphi_t(X(t)) - \varphi_t(x_{\varepsilon(t)}) + \frac{\varepsilon(t)}{2}\|x^*\|^2.
    \end{align*}
    By definition of $\mathcal{E}(t,X(t),Y(t))$ we have
    \begin{align}\label{eq:helper-lemma}
        \varphi_t(X(t))-\varphi_t(x_{\varepsilon(t)}) \leq \mathcal{E}(t,X(t),Y(t)),
    \end{align}
    which, combined with the above inequality, yields the first claim.

    By the strong convexity of $\varphi_t$ and by $x_{\varepsilon(t)} := \argmin_\mathcal{H} \varphi_t$, we have
    \begin{align*}
        \varphi_t(X(t))-\varphi_t(x_{\varepsilon(t)})\geq \frac{\varepsilon(t)}{2}\|X(t)-x_{\varepsilon(t)}\|^2.
    \end{align*}
    Combining this inequality with~\eqref{eq:helper-lemma}, we obtain
    \begin{align*}
        \mathcal{E}(t,X(t),Y(t)) \geq \frac{\varepsilon(t)}{2}\|X(t)-x_{\varepsilon(t)}\|^2,
    \end{align*}
    giving the second claim.
\end{proof}
Note that this means that the convergence results that we will prove for the particular case $\varepsilon(t) = \frac{1}{t^r}$ in the following section actually hold for any $\varepsilon$ that satisfies $\lim_{t\rightarrow+\infty}\frac{\mathcal{E}(t,X(t),Y(t))}{\varepsilon(t)}=0$.

\section{A standard choice for the regularization parameter}\label{sec:particular-case-epsilon-1/t^r}
In this section, we consider for $0<r<2$ and $t_0>0$ the parameter function $\varepsilon(t) := \frac{1}{t^r}$. Therefore, $\gamma(t) = \left(\frac{t}{t_1}\right)^\frac{r}{2}\exp\left(\frac{2(\delta-\lambda)}{2-r}\left(t^{1-\frac{r}{2}} - t_1^{1-\frac{r}{2}}\right)\right)$ and the system reads
\begin{align*}
\begin{cases}
    dX(t) = Y(t)dt
    \\ dY(t) = (-\delta \frac{1}{t^\frac{r}{2}}Y(t) - \nabla f(X(t)) - \frac{1}{t^r} X(t))dt + \sigma_X(t) dW_X(t).
\end{cases}
\end{align*}
In this setting, we prove the following convergence rates:
\begin{thm}\label{thm:strong-rates}
    If $\int_{t_1}^{+\infty} \gamma(s)\|\sigma_X(s)\|_{HS}^2 ds <+\infty$,
    \begin{align*}
        f(X(t)) - \inf_{\mathcal{H}} f = \mathcal{O}\left(\frac{1}{t^r}\right) \ \mbox{a.s.}, \ \|X(t)-x_{\varepsilon(t)}\|^2 = \mathcal{O}\left(\frac{1}{t^\frac{2-r}{2}}\right) \ \mbox{a.s. as} \ t\rightarrow +\infty,
    \end{align*}
    and
    \begin{align*}
        \|Y(t)\| = \begin{cases}
            \mathcal{O}\left(\frac{1}{t^\frac{r+2}{4}}\right) \ \mbox{if} \ r\in \left[\frac{2}{3}, 2\right)
            \\ \mathcal{O}\left(\frac{1}{t^r}\right) \ \mbox{if} \ r\in \left(0,\frac{2}{3}\right)
        \end{cases} \ \mbox{a.s. as} \ t\rightarrow +\infty,
    \end{align*}
    as well as the same rates in expectation:
    \begin{align*}
        \mathbb{E}(f(X(t)) - \inf_{\mathcal{H}} f) = \mathcal{O}\left(\frac{1}{t^r}\right), \ \mathbb{E}\left(\|X(t)-x_{\varepsilon(t)}\|^2\right) = \mathcal{O}\left(\frac{1}{t^\frac{2-r}{2}}\right) \ \mbox{as} \ t\rightarrow +\infty,
    \end{align*}
    and
    \begin{align*}
        \mathbb{E}(\|Y(t)\|) = \begin{cases}
            \mathcal{O}\left(\frac{1}{t^\frac{r+2}{4}}\right) \ \mbox{if} \ r\in \left[\frac{2}{3}, 2\right)
            \\ \mathcal{O}\left(\frac{1}{t^r}\right) \ \mbox{if} \ r\in \left(0,\frac{2}{3}\right)
        \end{cases} \ \mbox{as} \ t\rightarrow +\infty,
    \end{align*}
    Furthermore, $X(t)$ converges strongly to $x^*$ almost surely, where $x^*$ denotes the minimum norm minimizer of $f$.
\end{thm}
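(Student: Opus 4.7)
My plan is to specialize Theorem~\ref{thm:rates-in-expectation} to the case $\varepsilon(t) = t^{-r}$ and transfer the resulting rate on $\mathcal{E}(t,X(t),Y(t))$ to each of the three quantities via Lemma~\ref{lem:for-the-rates} and the decomposition $Y = V(t,X,Y) - \lambda\sqrt{\varepsilon(t)}(X-x_{\varepsilon(t)})$. For this $\varepsilon$ one has $\mu(t) = \frac{r}{2t} + (\delta-\lambda)t^{-r/2}$, asymptotically equivalent to $(\delta-\lambda)t^{-r/2}$ since $r<2$, and $G(t) = (a+c)\lambda r^2 t^{-r/2-2} + r t^{-r-1} \sim r t^{-r-1}$. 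A L'H\^{o}pital argument on $\frac{1}{\gamma(t)}\int_{t_1}^t G(s)\gamma(s)\,ds$, using $\mu'(t)/\mu(t)^2 = o(1)$, yields the asymptotic equivalent $G(t)/\mu(t) = O(t^{-(r+2)/2})$ for this ratio, while the other two terms in the conclusion of Theorem~\ref{thm:rates-in-expectation} carry the factor $1/\gamma(t)$ and decay super-polynomially. Hence $\mathbb{E}(\mathcal{E}(t,X(t),Y(t))) = O(t^{-(r+2)/2})$. Lemma~\ref{lem:for-the-rates} then gives $\mathbb{E}(f(X(t))-\inf f) \leq \mathbb{E}(\mathcal{E}) + \frac{\|x^*\|^2}{2t^r} = O(t^{-r})$, the Tikhonov error $t^{-r}$ dominating the sharper $t^{-(r+2)/2}$, and $\mathbb{E}(\|X(t)-x_{\varepsilon(t)}\|^2) \leq \frac{2\mathbb{E}(\mathcal{E})}{\varepsilon(t)} = O(t^{-(2-r)/2})$. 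For the velocity, $\|V\|^2 \leq 2\mathcal{E}$ combined with $Y = V - \lambda\sqrt{\varepsilon(t)}(X-x_{\varepsilon(t)})$ and $\|X-x_{\varepsilon(t)}\|^2 \leq 2\mathcal{E}/\varepsilon(t)$ yields $\|Y\|^2 \leq 4(1+\lambda^2)\mathcal{E}$, and Jensen's inequality gives $\mathbb{E}(\|Y(t)\|) = O(t^{-(r+2)/4})$. This coincides with the claim for $r \geq 2/3$ and is sharper than the stated $O(t^{-r})$ when $r<2/3$.

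For the almost-sure rates, I would start from the integrated inequality underlying the proof of Theorem~\ref{thm:rates-in-expectation}, namely
\[
\gamma(t)\mathcal{E}(t,X(t),Y(t)) \leq h(t) + M(t), \qquad M(t) := \int_{t_1}^t \gamma(s)\langle V(s,X,Y),\sigma_X(s)\,dW_X(s)\rangle,
\]
where $h(t) := \gamma(t_1)\mathcal{E}(t_1,X(t_1),Y(t_1)) + \frac{\|x^*\|^2}{2}\int_{t_1}^t G\gamma\,ds + \frac{1}{2}\int_{t_1}^{+\infty}\gamma\|\sigma_X\|_{HS}^2\,ds$ is increasing and deterministic, and satisfies $h(t) = \Theta(\gamma(t)\, t^{-(r+2)/2})$ by the previous paragraph. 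Using $\|V\|^2 \leq 2\mathcal{E}$, the expectation bound $\mathbb{E}(\mathcal{E}(s)) \leq h(s)/\gamma(s)$, and $h(s)^{-1} \leq h(t_1)^{-1}$, one gets $\mathbb{E}\int_{t_1}^{+\infty}h(s)^{-2}\,d\langle M\rangle(s) \leq 2\int_{t_1}^{+\infty}\gamma(s)\|\sigma_X(s)\|_{HS}^2\,ds < +\infty$. The strong law of large numbers for continuous local martingales then gives $M(t)/h(t) \to 0$ a.s., hence $\gamma(t)\mathcal{E}(t,X(t),Y(t)) \leq (1+o(1))h(t)$ a.s., so $\mathcal{E}(t,X(t),Y(t)) = O(t^{-(r+2)/2})$ almost surely. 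The same reductions via Lemma~\ref{lem:for-the-rates} and the decomposition of $Y$ transfer this bound to the claimed almost-sure rates for $f(X(t))-\inf f$, $\|X(t)-x_{\varepsilon(t)}\|^2$ and $\|Y(t)\|$.

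Strong almost-sure convergence $X(t)\to x^*$ then follows from Lemma~\ref{lem:for-the-rates} because $\mathcal{E}(t,X(t),Y(t))/\varepsilon(t) = O(t^{-(2-r)/2}) \to 0$ almost surely. The main obstacle is the almost-sure part: one must identify the correct normaliser $h(t)$ and verify the bootstrap relation by which the expectation bound controls the quadratic variation of $M$, so that the martingale strong law can actually be invoked.
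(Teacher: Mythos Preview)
Your proposal is correct and arrives at the same conclusions, but via a route that differs from the paper's in two substantive places. For the rate $\frac{1}{\gamma(t)}\int_{t_1}^t G\gamma\,ds = O\bigl(t^{-(r+2)/2}\bigr)$, the paper exhibits an explicit comparison $G(s)\gamma(s) \leq \frac{d}{ds}\bigl(\frac{1}{\rho s}\exp(\delta_0 s^{(2-r)/2})\bigr)$ for a suitable $\rho<\frac{1}{a+1}\delta$ and integrates; your L'H\^opital argument is equally valid once you also record $G'/(G\mu)=o(1)$ alongside $\mu'/\mu^2=o(1)$. For the almost-sure part, the paper writes $t^{(r+2)/2}\mathcal{E}$ minus two exponentially vanishing terms in the form ``constant plus local martingale minus increasing process'' and appeals to Theorem~\ref{thm:A.9-in-paper}; your bootstrap via the martingale strong law of large numbers, feeding the expectation bound $\mathbb{E}[\mathcal{E}(s)]\leq \bar h(s)/\gamma(s)$ back into the quadratic variation of $M$, is a genuinely different and arguably more transparent argument, since it sidesteps having to verify that the rescaled process still fits Mao's framework. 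Your bound $\|Y\|^2\leq 4(1+\lambda^2)\mathcal{E}$ is also tighter than the paper's route through the a.s.\ boundedness of $X$, which is why you get the uniform rate $O\bigl(t^{-(r+2)/4}\bigr)$, strictly improving the stated $O(t^{-r})$ when $r<2/3$. One small correction: the $h$ you write down contains $\gamma(t_1)\mathcal{E}(t_1,X(t_1),Y(t_1))$ and is therefore not deterministic; replace it by the deterministic increasing normaliser $h_0(t):=\frac{\|x^*\|^2}{2}\int_{t_1}^t G\gamma\,ds + \frac{1}{2}\int_{t_1}^{+\infty}\gamma\|\sigma_X\|_{HS}^2\,ds$, which is strictly positive at $t_1$ whenever $\sigma_X\not\equiv 0$ (the case $\sigma_X\equiv 0$ being purely deterministic), and your argument goes through verbatim.
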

\begin{proof}
    First, observe that
    \begin{align*}
        \lim_{t\rightarrow +\infty} \frac{1}{\gamma(t)}\int_{t_1}^{t} G(s)\gamma(s)ds = 0,
    \end{align*}
    since
    \begin{align*}
    \begin{split}
        \frac{1}{\gamma(t)}&\int_{t_1}^{t} G(s)\gamma(s)ds
        \\&=\left(\frac{t_1}{t}\right)^{\frac{r}{2}}\exp\left(\underbrace{\frac{2}{r-2}(\delta-\lambda)\left(\frac{1}{t^\frac{r-2}{2}} - \frac{1}{t_1^\frac{r-2}{2}}\right)}_{=:g(t)}\right)
        \\&\qquad\qquad\int_{t_1}^{t}\left((a+c)\lambda r^2\frac{1}{s^{\frac{r}{2}+2}} + r\frac{1}{s^{r+1}}\right) \left(\frac{s}{t_1}\right)^\frac{r}{2} \exp\left(\frac{2}{2-r}(\delta-\lambda)\left(\frac{1}{s^\frac{r-2}{2}} - \frac{1}{t_1^\frac{r-2}{2}}\right)\right)ds
        \\&\leq \left(\frac{t_1}{t}\right)^{\frac{r}{2}}\exp(g(t))\int_{t_1}^{t}\left((a+c)\lambda r^2 s^{-\frac{r}{2}-2} + rs^{-r-1}\right)\left(\frac{s}{t_1}\right)^\frac{r}{2}\exp(-g(t))ds
        \\&= \left(\frac{1}{t}\right)^{\frac{r}{2}}\left((a+c)\lambda r^2\left(\frac{1}{t_1} - \frac{1}{t}\right) + 2\left(\frac{1}{t_1^\frac{r}{2}} - \frac{1}{t^\frac{r}{2}}\right)\right)
        \\&\leq \mathcal{O}\left(\frac{1}{t^\frac{r}{2}}\right) \rightarrow 0 \ \mbox{as} \ t\rightarrow +\infty.
    \end{split}
    \end{align*}
    Now recall that
    \begin{align*}
        \mathcal{E}(&t,X(t),Y(t))
        \\&\leq \frac{\gamma(t_1)}{\gamma(t)}\mathcal{E}(t_1,X(t_1),Y(t_1)) + \frac{1}{\gamma(t)}\int_{t_1}^t \frac{\|x^*\|^2}{2}G(s)\gamma(s) ds
        \\&\quad + \frac{1}{\gamma(t)} \int_{t_1}^{t}\gamma(s)\langle \lambda\sqrt{\varepsilon(s)}(X(s)-x_{\varepsilon(s)}) + Y(s), \sigma_X(s) dW_X(s)\rangle + \frac{1}{\gamma(t)} \int_{t_1}^{t}\frac{1}{2}\gamma(s)\trace(\sigma_X(s)\sigma_X(s)^*)ds.
    \end{align*}
    Take the parameters $a>1$, $c>2$, $\lambda>0$ such that
    \begin{align*}
        \frac{1}{2}\left(\delta+\frac{1}{c} + \sqrt{\left(\delta+\frac{1}{c}\right)^2-2}\right) < \lambda < \min\left(\frac{a}{a+1}\delta, \frac{\delta + \sqrt{\delta^2-4}}{2}\right).
    \end{align*}
    For $t\geq t_1$ large enough, we can prove that
    \begin{align*}
        \frac{d}{dt}\left(\frac{1}{\sqrt{\varepsilon(t)}}\right) = \frac{r}{2}t^\frac{r-2}{2} \leq \min\left(2\lambda-\delta, \frac{1}{2}\left(\delta-\frac{a+1}{a}\lambda\right)\right).
    \end{align*}
    Therefore, the condition~\eqref{eq:condition-tikhonov-parameter} is satisfied. In order to apply some of the estimates established in the proof of Theorem~\ref{thm:rates-in-expectation}, observe
    \begin{align*}
        \mu(t) &= -\frac{\dot{\varepsilon}(t)}{2\varepsilon(t)} + (\delta-\lambda)\sqrt{\varepsilon(t)} = \frac{r}{2t} + \frac{\delta-\lambda}{t^\frac{r}{2}}
        \\ \gamma(t) &= \exp\left(\int_{t_1}^t \mu(s)ds\right) = \left(\frac{t}{t_1}\right)^\frac{r}{2}\exp\left(\frac{2(\delta-\lambda)}{2-r}\left(t^\frac{2-r}{2} - t_1^\frac{2-r}{2}\right)\right)
        \\&= C_1t^\frac{r}{2} \exp\left(\frac{2(\delta-\lambda)}{2-r}t^\frac{2-r}{2}\right), \ \mbox{where} \ C_1 := \left(t_1^\frac{r}{2}\exp\left(\frac{2(\delta-\lambda)}{2-r}t_1^\frac{2-r}{2}\right)\right)^{-1}.
    \end{align*}
    Now, set
    \begin{align*}
        \lambda_0 := (a+c)\lambda, \ \delta_0 := \frac{2(\delta-\lambda)}{2-r}
    \end{align*}
    and combine this with the estimate for $\mathcal{E}(t,X(t),Y(t))$ to obtain
    \begin{align*}
        \mathcal{E}(t,X(t),Y(t)) &\leq \frac{r}{2t^\frac{r}{2}\exp\left(\delta_0\left(t^\frac{2-r}{2} - t_1^\frac{2-r}{2}\right)\right)}\int_{t_1}^t \left(\frac{\lambda_0r}{s^2} + \frac{1}{s^\frac{r+2}{2}}\right)\exp\left(\delta_0\left(s^\frac{2-r}{2} - t_1^\frac{2-r}{2}\right)\right) ds
        \\&\quad+ \frac{\gamma(t_1)\mathcal{E}(t_1,X(t_1),Y(t_1))}{\gamma(t)}
        \\&\quad + \int_{t_1}^{t}\langle \lambda\sqrt{\varepsilon(s)}(X(s)-x_{\varepsilon(s)}) + Y(s), \sigma_X(s) dW_X(s)\rangle + \frac{1}{\gamma(t)} \int_{t_1}^{t}\frac{1}{2}\gamma(s)\trace(\sigma_X(s)\sigma_X(s)^*)ds.
    \end{align*}
    Next, consider the integral $\int_{t_1}^t \left(\frac{\lambda_0 r}{s^2} + \frac{1}{s^\frac{r+2}{2}}\right)\exp\left(\delta_0\left(s^\frac{2-r}{2} - t_1^\frac{2-r}{2}\right)\right)ds$. For $\rho>0$,
    \begin{align*}
        \frac{d}{ds}\left(\frac{1}{\rho s}\exp\left(\delta_0\left(s^\frac{2-r}{2} - t_1^\frac{2-r}{2}\right)\right)\right) = \left(-\frac{1}{\rho s^2} + \frac{\delta_0(2-r)}{2\rho s^\frac{r+2}{2}}\right)\exp\left(\delta_0\left(s^\frac{2-r}{2} - t_1^\frac{2-r}{2}\right)\right).
    \end{align*}
    We now want to show that
    \begin{align*}
        \frac{\lambda_0 r}{s^2} + \frac{1}{s^\frac{r+2}{2}} \leq -\frac{1}{\rho s^2} + \frac{\delta_0(2-r)}{2\rho s^\frac{r+2}{2}}.
    \end{align*}
    By taking $\rho < \frac{1}{(a+1)}\delta$, we have
    \begin{align}\label{eq:helper-strong-rates}
    \begin{split}
        \frac{\lambda_0 r}{s^2} + \frac{1}{s^\frac{r+2}{2}} \leq -\frac{1}{\rho s^2} + \frac{\delta_0(2-r)}{2\rho s^\frac{r+2}{2}} &\Leftrightarrow \frac{\lambda_0 r + \frac{1}{\rho}}{s^2} \leq \left(\frac{\delta_0 (2-r)}{2\rho} - 1\right)\frac{1}{s^\frac{r+2}{2}} = \frac{\frac{\delta-\lambda}{\rho} - 1}{s^\frac{r+2}{2}}
        \\&\Leftrightarrow \frac{\lambda_0 r + \frac{1}{\rho}}{s^\frac{2-r}{2}} \leq \frac{\delta-\lambda}{\rho} - 1.
    \end{split}
    \end{align}
    Since $0<r<2$, it follows that $\lim_{s\rightarrow + \infty} \frac{1}{s^\frac{2-r}{2}} = 0$. The fact that $\lambda < \frac{a}{a+1}\delta$ in conjunction with the choice of $\rho$ implies
    \begin{align*}
        \delta-\lambda-\rho > \underbrace{\left(\frac{a}{a+1}\delta-\lambda\right)}_{>0} + \frac{1}{a+1}\delta - \rho > -\left(\rho - \frac{1}{a+1}\delta\right)>0.
    \end{align*}
    Thus, the last inequality in the equivalence~\eqref{eq:helper-strong-rates} holds for sufficiently large $s$, so for $1\leq r < 2$ and $t_1$ large enough, we have
    \begin{align*}
        \mathcal{E}(t,X(t),Y(t)) &\leq \frac{r}{2t^\frac{r}{2}\exp\left(\delta_0\left(t^\frac{2-r}{2} - t_1^\frac{2-r}{2}\right)\right)}\int_{t_1}^t \left(-\frac{1}{\rho s^2} + \frac{\delta_0(2-r)}{2\rho s^\frac{r+2}{2}}\right)\exp\left(\delta_0\left(s^\frac{2-r}{2} - t_1^\frac{2-r}{2}\right)\right) ds
        \\&\quad+ \frac{\gamma(t_1)\mathcal{E}(t_1,X(t_1),Y(t_1))}{\gamma(t)}
        \\&\quad + \frac{1}{\gamma(t)}\int_{t_1}^{t}\gamma(s)\langle \lambda\sqrt{\varepsilon(s)}(X(s)-x_{\varepsilon(s)}) + Y(s), \sigma_X(s) dW_X(s)\rangle
        \\&\quad+ \frac{1}{\gamma(t)} \int_{t_1}^{t}\frac{1}{2}\gamma(s)\trace(\sigma_X(s)\sigma_X(s)^*)ds
        \\&= \frac{r}{2t^\frac{r}{2}\exp\left(\delta_0\left(t^\frac{2-r}{2} - t_1^\frac{2-r}{2}\right)\right)}\int_{t_1}^t \frac{d}{ds}\left(\frac{1}{\rho s}\exp\left(\delta_0\left(s^\frac{2-r}{2} - t_1^\frac{2-r}{2}\right)\right)\right)ds
        \\&\quad+ \frac{\gamma(t_1)\mathcal{E}(t_1,X(t_1),Y(t_1))}{\gamma(t)}
        \\&\quad + \frac{1}{\gamma(t)}\int_{t_1}^{t}\gamma(s)\langle \lambda\sqrt{\varepsilon(s)}(X(s)-x_{\varepsilon(s)}) + Y(s), \sigma_X(s) dW_X(s)\rangle
        \\&\quad+ \frac{1}{\gamma(t)} \int_{t_1}^{t}\frac{1}{2}\gamma(s)\trace(\sigma_X(s)\sigma_X(s)^*)ds
        \\&= \frac{r}{2\rho t^\frac{r+2}{2}} - \frac{r}{t^\frac{r}{2}\exp\left(\delta_0 t^\frac{2-r}{2}\right)}\frac{1}{2\rho t_1}\exp\left(\delta_0t_1^\frac{2-r}{2}\right) + \frac{\gamma(t_1)\mathcal{E}(t_1,X(t_1),Y(t_1))}{\gamma(t)}
        \\&\quad + \frac{1}{\gamma(t)}\int_{t_1}^{t}\gamma(s)\langle \lambda\sqrt{\varepsilon(s)}(X(s)-x_{\varepsilon(s)}) + Y(s), \sigma_X(s) dW_X(s)\rangle
        \\&\quad+ \frac{1}{\gamma(t)} \int_{t_1}^{t}\frac{1}{2}\gamma(s)\trace(\sigma_X(s)\sigma_X(s)^*)ds
    \end{align*}
Therefore, we can denote the difference between the left- and right-hand sides of the above inequality by $U(t)$ and obtain the identity
\begin{align}\label{eq:estimate-power-of-t-times-energy}
\begin{split}
    t^\frac{r+2}{2}\mathcal{E}(t,&X(t),Y(t)) - \frac{t^\frac{r+2}{2}\gamma(t_1)\mathcal{E}(t_1, X(t_1),Y(t_1))}{\gamma(t)} - \frac{t^\frac{r+2}{2}}{\gamma(t)} \int_{t_1}^{t}\frac{1}{2}\gamma(s)\trace(\sigma_X(s)\sigma_X(s)^*)ds
    \\&= \frac{r}{2\rho} + \frac{t^\frac{r+2}{2}}{\gamma(t)}\int_{t_1}^t \gamma(s)\left\langle \lambda \sqrt{\varepsilon(s)}(X(s)-x_{\varepsilon(s)}) + Y(s), \sigma_X(s) dW_X(s)\right\rangle - t^\frac{r+2}{2}U(t).
\end{split}
\end{align}
Because $U(t)\geq 0$ is given by the integral of the difference of the integrands of the left- and right-hand sides and remains nonnegative for all $t$ for any choice of $t_1$, the integrand of $U(t)$ must be nonnegative, thus $U(t)$ is increasing, from which we also get that $t^\frac{r+2}{2}U(t)$ must be increasing. Since the first line of the RHS is actually a constant and multiplying a continuous square-integrable martingale by a power of $t$ does not affect its martingale property, we are in the framework of Theorem~\ref{thm:A.9-in-paper}. Using it yields a.s. strong convergence of $t^\frac{r+2}{2}\mathcal{E}(t,X(t),Y(t))$, i.e. that $\mathcal{E}(t,X(t),Y(t)) = \mathcal{O}\left(\frac{1}{t^\frac{r+2}{2}}\right)$ a.s., since the second and third summands of the LHS go to $0$ with an exponential rate. For the second summand, this is immediate. For the third summand, observe that from the definition of the Hilbert-Schmidt norm,
\begin{align*}
    \trace(\sigma_X(s)\sigma_X(s)^*) = \sum_{i} |\langle e_i, \sigma_X(s)\sigma_X(s)^*e_i\rangle | = \|\sigma_X(s)^*\|_{HS}^2 = \|\sigma_X(s)\|_{HS}^2.
\end{align*}
Therefore, we can employ the assumption that $\int_{t_1}^{+\infty} \gamma(s) \|\sigma_X(s)\|_{HS}^2 < +\infty$ to obtain the exponential convergence to $0$ of the third summand.

From $\mathcal{E}(t,X(t),Y(t)) = \mathcal{O}\left(\frac{1}{t^\frac{r+2}{2}}\right)$ a.s., it follows on the one hand by Lemma~\ref{lem:for-the-rates} that almost surely, $X(t)$ converges strongly to a solution $x^*$ of the original problem. On the other hand, this also implies the convergence rate of
\begin{align*}
    f(X(t)) - \inf_{\mathcal{H}} f = \mathcal{O}\left(\frac{1}{t^r}\right) \ \mbox{a.s.}
\end{align*}
Furthermore, the estimates from Lemma~\ref{lem:for-the-rates} imply the existence of constants $M$ and $C$ such that, a.s.,
\begin{align*}
    f(X(t)) - \inf_{\mathcal{H}} f \leq C\left(\frac{1}{t^\frac{r+2}{2}} + \frac{1}{t^r}\right)&, \ \|X(t) - x_{\varepsilon(t)}\|^2 \leq \frac{2\mathcal{E}(t,X(t),Y(t))}{\varepsilon(t)} \leq \frac{2C}{t^\frac{2-r}{2}},
    \\ \|Y(t) - \varepsilon(t)X(t)\|^2 &\leq M\mathcal{E}(t,X(t),Y(t))\leq \frac{MC}{t^\frac{r+2}{2}}.
\end{align*}
Since $0<r<2$, it follows
\begin{align*}
    f(X(t)) - \inf_{\mathcal{H}} f = \mathcal{O}\left(\frac{1}{t^r}\right) \ \mbox{a.s.}, \ \|X(t)-x_{\varepsilon(t)}\|^2 = \mathcal{O}\left(\frac{1}{t^\frac{2-r}{2}}\right) \ \mbox{a.s. as} \ t\rightarrow +\infty.
\end{align*}
For the final statement, consider
\begin{align*}
    \|Y(t)\| \leq \|Y(t) - \varepsilon(t)X(t)\| + \varepsilon(t)\|X(t)\| \leq \frac{\sqrt{MC}}{t^\frac{r+2}{4}} + \frac{1}{t^r}\|X(t)\|.
\end{align*}
Since $X$ is bounded a.s., we can conclude that a.s.,
\begin{align*}
    \|Y(t)\| = \begin{cases}
        \mathcal{O}\left(\frac{1}{t^\frac{r+2}{4}}\right) \ \mbox{if} \ r\in \left[\frac{2}{3}, 2\right)
        \\ \mathcal{O}\left(\frac{1}{t^r}\right) \ \mbox{if} \ r\in \left(0,\frac{2}{3}\right).
    \end{cases}
\end{align*}
Taking the expected value of~\eqref{eq:estimate-power-of-t-times-energy} yields boundedness of the expected value of its LHS and thus shows
\begin{align*}
    \mathbb{E}(\mathcal{E}(t,X(t),Y(t)) = \mathcal{O}\left(\frac{1}{t^{\frac{r+2}{2}}}\right),
\end{align*}
from which analogous arguments to the a.s. rates from above show the same behaviour for the expected values.
\end{proof}
\begin{rmk}
    Consider the system incorporating the more general parameter functions $\frac{\alpha}{t^q}$ and $\frac{a}{t^p}$, where $\alpha,q,a,p>0$:
\begin{align}\label{eq:S-TRIGS-GEN}\tag{S-TRIGS-GEN}
    \begin{cases}
        dX(t) = Y(t)dt
        \\ dY(t) = \left(-\frac{\alpha}{t^q}Y(t) - \nabla f(X(t)) - \frac{a}{t^p}X(t) \right)dt + \sigma_X(t) dW_X(t)
        \\ X(t_0) = X_0, \ Y(t_0) = Y_0,
    \end{cases}
\end{align}
    i.e. being no longer bound by the condition that $p = 2q$. For this system, it is possible to prove a.s. convergence rates as well as convergence rates in expectation for the function values along the trajectory to an infimal value and the time derivative of the trajectory process to zero under suitable integrability conditions on $\sigma_X$. This is a stochastic version of the system examined in~\cite{laszlo}, in which analogous results are proved for the deterministic setting.
\end{rmk}

\section*{Acknowledgements}
The author would like to thank Radu Ioan Bo\cb{t} for the fruitful discussions and valuable feedback on a previous draft of this paper.

\renewcommand\thesection{\Alph{section}}
\renewcommand\thesubsection{\thesection.\Alph{subsection}}
\setcounter{section}{0}

\section{Appendix}
Consider the general formulation of a stochastic differential equation on $[t_0, +\infty]$, where $t_0 \geq 0$,
    \begin{align*}
    \tag{SDEGEN}
    \label{SDE-GEN}
        \begin{cases}
            dX(t)= F(t,X(t)) dt + G(t,X(t)) dW(t) 
            \\ X(t_0)=X_0 \in \mathcal{H},
        \end{cases}
    \end{align*}
defined on a filtered probability space $(\Omega,\mathcal{F},\{\mathcal{F}_t\}_{t\geq t_0},\mathbb{P})$, where $W$ denotes a $\mathcal{H}$-valued Brownian motion, $F: [t_0, +\infty) \times\mathcal{H} \to \mathcal{H}$ and $G: [t_0, +\infty) \times\mathcal{H} \to {\mathcal L}(\mathcal{H}, \mathcal{H})$. We equip ${\mathcal L}(\mathcal{H}, \mathcal{H})$ with the Hilbert-Schmidt inner product and corresponding norm $\|\cdot\|_{HS}$.

In the following definition, we introduce the notion of a solution that we consider for \eqref{SDE-GEN}.

\begin{defi}\label{def:strong-solution}(\cite[Definition 3.1]{gawarecki-mandrekar})
    A stochastic process $X : \Omega \times [t_0, +\infty) \to \mathcal{H} $  is a strong solution of~\eqref{SDE-GEN} if 
    \begin{enumerate}[(i)]
        \item $X\in C([t_0,+\infty),\mathcal{H})$ almost surely;
        \item for every $t \geq t_0$ it holds
        \begin{align*}
            \int_{t_0}^t \big(\|F(s,X(s))\| + \|G(s,X(s))\|_{HS}^2\big) ds < +\infty \quad \mbox{almost surely};
        \end{align*}
        \item for every $t \geq t_0$ it holds
        \begin{align*}
            X(t) = X(t_0) + \int_{t_0}^t F(s,X(s)) ds + \int_{t_0}^t G(s,X(s))dW(s) \quad \mbox{almost surely}.
        \end{align*}
    \end{enumerate}
\end{defi}

Furthermore, we introduce a class of stochastic processes characterized by a key property governing the expectation of their norm raised to a given power.

 \begin{defi}\label{def21}
(i) A stochastic process $X:\Omega \times [t_0, +\infty) \rightarrow \mathcal{H}$ is called progressively measurable if for every $T \geq t_0$ the mapping
        \begin{align*}
            &\Omega\times [t_0,T] \rightarrow \mathcal{H}, \quad (\omega,s) \mapsto X(\omega,s),
        \end{align*}
        is $\mathcal{F}_T \otimes\mathcal{B}([t_0,T])$-measurable, where $\otimes$ denotes the product $\sigma$-algebra and $\mathcal{B}$ is the Borel $\sigma$-algebra. Further, $X$ is called adapted if $X(\cdot,T)$ is $\mathcal{F}_T$-measurable for every $T\geq t_0$.
        \item For $T \geq t_0$, we define the quotient space as
        \begin{align*}
            S_\mathcal{H}^0[t_0,T]:=\left\{X:\Omega\times[t_0,T]\rightarrow\mathcal{H}: \text{$X$ is a progressively measurable continuous stochastic process}\right\}\Big/\mathcal{R},
        \end{align*}
     and set $S_\mathcal{H}^0:=\bigcap_{T\geq t_0} S_\mathcal{H}^0[t_0,T]$. 
     
(ii) For $\nu>0$ and $T \geq t_0$, we define $S_\mathcal{H}^\nu [t_0,T]$ as the following subset of stochastic processes in $S_\mathcal{H}^0 [t_0,T]$
        \begin{align*}
            S_\mathcal{H}^\nu [t_0,T] := \left\{X\in S_\mathcal{H}^0 [t_0,T]: \quad \mathbb{E}\left(\sup_{t\in[t_0,T]} \|X(t)\|^\nu\right) < +\infty \right\}.
        \end{align*}
        Finally, we set $S_\mathcal{H}^\nu := \bigcap_{T\geq t_0} S_\mathcal{H}^\nu[t_0,T]$.
\end{defi}
The theorem below ensures the existence and uniqueness of solutions to \eqref{SDE-GEN} within a fixed compact interval.

\begin{thm}\label{thm:existence-uniqueness-solutions}
Consider the general stochastic differential equation \eqref{SDE-GEN} and let $T \geq t_0$. Assume that  $F$ and $G$ are jointly measurable, i.e. $F:[t_0,T]\times \mathcal{H}\rightarrow \mathcal{H}$ and $G:[t_0,T]\times \mathcal{H}\rightarrow \mathcal{L}(\mathcal{H},\mathcal{H})$ are  $\mathcal{B}([t_0,T])\otimes\mathcal{B}(\mathcal{H})$-measurable, and also measurable with respect to the product $\sigma$-field $\mathcal{B}([t_0,t])\otimes \mathcal{B}(\mathcal{H})$ on $[t_0,t]\times\mathcal{H}$ for every $t_0\leq t\leq T$.

\begin{enumerate}[(i)]
\item\label{thm:item:existence-uniqueness-strong-solution} (\cite[Theorem 3.3]{gawarecki-mandrekar}) Assume that the following conditions hold:
    \begin{enumerate}[(1)]
        \item There exists a constant $\ell$ such that for every $X \in C([t_0,T],\mathcal{H})$ almost surely it holds
        \begin{align*}
            \|F(t,X(\omega,t))\| + \|G(t,X(\omega,t))\|_{HS} \leq \ell \left(1+\sup_{t_0\leq s\leq T} \|X(\omega, s)\| \right) \ \mbox{for every} \ t_0 \leq t \leq T \ \mbox{and} \ \omega\in\Omega.
        \end{align*}
        \item For every $X,Y \in C([t_0,T],\mathcal{H}), t_0 \leq t \leq T$ and $\omega\in\Omega$, there exists $K > 0$ such that
        \begin{align*}
            \|F(t,X(\omega,t)) - F(t,Y(\omega,t))\| + \|G(t,X(\omega,t))-G(t,Y(\omega,t))\|_{HS} \leq K\sup_{t_0\leq s\leq T} \|X(\omega, s) - Y(\omega, s)\|.
        \end{align*}
    \end{enumerate}
 Then, ~\eqref{SDE-GEN} has a unique strong solution on $[t_0,T]$.
    \item(\cite[Theorem A.7]{soto-fadili-attouch}, \cite[Theorem 5.2.1]{oksendal})\label{thm:item:help-existence-uniqueness} Additionally, assume that there exists $C>0$ such that
    \begin{align*}
        \|F(t,x)-F(t,y)\| + \|G(t,x)-G(t,y)\|_{HS} \leq C \|x-y\| \quad \mbox{for every} \ x,y\in\mathcal{H} \ \mbox{and} \ t_0 \leq t \leq T.
    \end{align*}
 Then, the unique solution of the stochastic differential equation~\eqref{SDE-GEN} on $[t_0, T]$ lies in $S_{\mathcal{H}}^{\nu}[t_0,T]$ for every $\nu\geq 2$.
\end{enumerate}
\end{thm}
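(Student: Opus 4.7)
For part~\eqref{thm:item:existence-uniqueness-strong-solution}, my approach is a Picard-Lindelöf iteration adapted to the path-space formulation. On $S_\mathcal{H}^2[t_0,T]$, I would define the Picard operator
\begin{align*}
    \Phi(X)(t) := X_0 + \int_{t_0}^t F(s,X(s))\,ds + \int_{t_0}^t G(s,X(s))\,dW(s),
\end{align*}
and first verify that $\Phi$ maps $S_\mathcal{H}^2[t_0,T]$ into itself, using condition~(1), the Cauchy-Schwarz inequality for the drift integral, and the Itô isometry for the stochastic integral; path-continuity of $\Phi(X)$ follows from the standard continuous modification of stochastic integrals with progressively measurable square-integrable integrands. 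The heart of the argument is an estimate of the form
\begin{align*}
    \mathbb{E}\Big(\sup_{t_0 \leq s \leq t}\|\Phi(X)(s) - \Phi(Y)(s)\|^2\Big) \leq C(T,K)\int_{t_0}^t \mathbb{E}\Big(\sup_{t_0 \leq r \leq s}\|X(r) - Y(r)\|^2\Big)\, ds,
\end{align*}
obtained from condition~(2) together with Cauchy-Schwarz on the drift and the Burkholder-Davis-Gundy inequality on the stochastic integral. Equipping $S_\mathcal{H}^2[t_0,T]$ with the weighted norm $\|X\|_\beta^2 := \sup_{t\in[t_0,T]} e^{-\beta t}\mathbb{E}(\sup_{t_0 \leq s \leq t}\|X(s)\|^2)$ for $\beta$ sufficiently large makes $\Phi$ a contraction, and the Banach fixed point theorem then yields a unique strong solution (uniqueness in the sense of Definition~\ref{def:strong-solution} following from the same contraction estimate combined with a Gronwall argument applied to any two putative solutions).

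For part~\eqref{thm:item:help-existence-uniqueness}, the pointwise Lipschitz assumption immediately implies a pointwise linear growth bound $\|F(t,x)\| + \|G(t,x)\|_{HS} \leq C_1(1+\|x\|)$ by fixing $y$. Starting from the strong solution $X$ constructed in~(i), I would apply Itô's formula to $\|X(t)\|^2$, bound the drift terms using this linear growth together with Young's inequality, and control the martingale part via the BDG inequality. A Gronwall-type argument then produces the bound $\mathbb{E}(\sup_{t_0 \leq s \leq T}\|X(s)\|^2) < +\infty$. To extend to $\nu > 2$, I would apply the same scheme to $\|X(t)\|^\nu$, justified by a standard localization via stopping times $\tau_n := \inf\{t \geq t_0 : \|X(t)\| \geq n\}$, and bootstrap the moment bound inductively in $\nu$.

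The main obstacle will be the path-dependent form of conditions~(1) and~(2) in part~(i), which involves the full path supremum $\sup_{t_0 \leq s \leq T}\|X(\omega,s)\|$ rather than the pointwise value at time $t$. This forces all estimates to be performed in path space and requires the BDG inequality (rather than the simpler Itô isometry) so that the $\sup_{t_0 \leq s \leq t}$ appearing on the left-hand side of the contraction estimate can be generated and absorbed by the time-integral on the right-hand side; the weighting by $e^{-\beta t}$ is what ultimately turns this into a genuine contraction on the whole interval $[t_0,T]$ rather than only on a small interval.
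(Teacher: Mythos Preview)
The paper does not actually prove this theorem: it is stated in the Appendix as a recalled result, with part~(i) attributed to \cite[Theorem~3.3]{gawarecki-mandrekar} and part~(ii) to \cite[Theorem~A.7]{soto-fadili-attouch} and \cite[Theorem~5.2.1]{oksendal}, and no proof is supplied. So there is nothing in the paper to compare your argument against.

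That said, your sketch is a faithful outline of the standard argument found in those references. The Picard iteration on $S_\mathcal{H}^2[t_0,T]$, the BDG-based contraction estimate, and the weighted-norm trick to obtain a global contraction are precisely the ingredients of the Gawarecki--Mandrekar proof of part~(i); likewise, the linear-growth-plus-Gronwall moment bound with stopping-time localization for higher $\nu$ is the textbook route to part~(ii). One small point: in part~(ii) you derive linear growth from the pointwise Lipschitz condition alone, but this requires that $t\mapsto\|F(t,0)\|+\|G(t,0)\|_{HS}$ be bounded on $[t_0,T]$, which is not explicitly among the hypotheses of~(ii) as stated---it is, however, implied by condition~(1) of part~(i), which is assumed to be in force (``Additionally, assume\ldots''). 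You might make that dependence explicit.
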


Using standard extension arguments, we then obtain a unique solution of the system \eqref{SDE-GEN} on $[t_0, +\infty)$ which also belongs to $S_{\mathcal{H}}^{\nu}$ for every $\nu \geq 2$.

Next, we recall the It\^o formula.

\begin{prop}\label{prop:ito-formula}\cite[Theorem 2.9]{gawarecki-mandrekar}
Let $T \geq t_0$ be fixed. Let $X$ be a stochastic process given by
\begin{align*}
    X(t) = X_0 + \int_{t_0}^{t} F(s) ds + \int_{t_0}^{t} G(s) dW(s) \quad \forall t\in[t_0,T],
\end{align*}
defined, as before, on a filtered probability space $(\Omega,\mathcal{F},\{\mathcal{F}_t\}_{t_0 \leq t \leq T},\mathbb{P})$, where $W$ denotes a $\mathcal{H}$-valued Brownian motion, $F:[t_0,T]\rightarrow \mathcal{H}$ and $G:[t_0,T]\rightarrow \mathcal{L}(\mathcal{H},\mathcal{H})$. Moreover, we require that $X_0$ is an $\mathcal{F}_{t_0}$-measurable $\mathcal{H}$-valued random variable, $F(\cdot)$ is adapted such that
\begin{align*}
    \int_{t_0}^{T} \|F(t)\| dt < +\infty \ \mbox{a.s.},
\end{align*}
and $G(\cdot)$ is adapted such that
\begin{align*}
    \int_{t_0}^{T} \|G(t)\|^2_{HS} dt < +\infty \ \mbox{a.s.}
\end{align*}
Let $\phi:[t_0, +\infty) \times\mathcal{H} \rightarrow \R$ be such that $\phi(\cdot,x)\in\text{C}^1([t_0, +\infty))$ for every $x\in\mathcal{H}$ and $\phi(t,\cdot)\in\text{C}^2(\mathcal{H})$ for every $t\geq t_0$. Then, 
    \begin{align*}
        \Tilde{X}(t):=\phi(t,X(t))
    \end{align*}
is an It\^{o} process such that
    \begin{align*}
        d\Tilde{X}(t) = \frac{d}{dt} \phi(t,X(t)) dt &+ \left\langle \nabla_x \phi(t,X(t)), F(t)\right\rangle dt + \langle \nabla_x\phi(t,X(t)),G(t) dW(t)\rangle
        \\&+ \frac{1}{2} \trace \left(\nabla_{xx}^2\phi(t,X(t)) G(t)G(t)^*\right) dt \quad \forall t \geq t_0.
    \end{align*}
    Here, $G(t)^*$ denotes the adjoint operator of $G(t)$ and $\trace(\cdot)$ the trace of an operator $A \in {\cal L}(\mathcal{H}, \mathcal{H})$, defined by
    \begin{align*}
        \trace(A) = \sum_{k=1}^\infty \langle Ae_k, e_k\rangle,
    \end{align*}
    where $\{e_k\}_{k\in\N}$ denotes an orthonormal basis of the Hilbert space $\mathcal{H}$.
    Further, if, for every $t_0 \leq t \leq T$,
    \begin{align*}
        \mathbb{E}\left(\int_{t_0}^t \|G(s)^*\nabla_x\phi(s,X(s))\|^2 ds\right)<+\infty,
    \end{align*}
    then $t \mapsto \int_{t_0}^t\left\langle G(s)^*\nabla_x\phi(s,X(s)), dW(s) \right\rangle$ is, for every $t_0 \leq t \leq T$, a square integrable continuous martingale with expected value $0$.
\end{prop}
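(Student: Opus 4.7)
Since this proposition is stated with a direct citation to \cite[Theorem 2.9]{gawarecki-mandrekar}, the paper itself will presumably simply invoke that reference rather than reprove a classical result. Nevertheless, the plan for a self-contained proof would run along the standard lines for the Hilbert-space It\^o formula, which I sketch below.

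The plan is to proceed by localization and Taylor expansion. First I would introduce stopping times $\tau_n := \inf\{t \geq t_0 : \|X(t)\| \geq n\} \wedge T$ and work with the stopped processes $X^{\tau_n}$, $F^{\tau_n}$, $G^{\tau_n}$; since $\phi$ and its derivatives are continuous in $x$, they are bounded on $\{\|x\| \leq n\}$ uniformly over $t \in [t_0,T]$, which makes all subsequent interchanges of limits legitimate. After proving the formula for each $n$, I would let $n \to +\infty$ using the a.s.\ continuity of $X$ (so $\tau_n \to T$ a.s.) together with the integrability hypotheses on $F$ and $G$ to pass to the limit termwise.

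At the stopped level, I would fix a partition $t_0 = s_0 < s_1 < \dots < s_N = t$ and write the telescoping identity
\begin{align*}
\phi(t, X(t)) - \phi(t_0, X(t_0)) = \sum_{k=0}^{N-1} \bigl[\phi(s_{k+1}, X(s_{k+1})) - \phi(s_k, X(s_k))\bigr],
\end{align*}
then Taylor-expand each summand to first order in time and second order in the spatial variable. The first-order time contribution converges to $\int_{t_0}^t \frac{d}{ds}\phi(s, X(s))\, ds$ by continuity. The spatial gradient contribution, using $X(s_{k+1}) - X(s_k) = \int_{s_k}^{s_{k+1}} F\, du + \int_{s_k}^{s_{k+1}} G\, dW$, splits into a Riemann-type term converging to $\int_{t_0}^t \langle \nabla_x\phi(s, X(s)), F(s)\rangle\, ds$ and a stochastic integral term converging to $\int_{t_0}^t \langle \nabla_x\phi(s,X(s)), G(s)\, dW(s)\rangle$ by the It\^o isometry and the continuity of $\nabla_x\phi$. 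The second-order spatial term produces the quadratic variation contribution; here one uses that for a $\mathcal{H}$-valued Brownian motion with covariance structure encoded by the Hilbert-Schmidt inner product, the cross-variation of $\int G\, dW$ with itself is $\int G G^*\, ds$ in the trace-class sense, yielding $\tfrac{1}{2}\int_{t_0}^t \trace(\nabla_{xx}^2\phi(s,X(s)) G(s) G(s)^*)\, ds$ in the limit as $\max|s_{k+1}-s_k|\to 0$. The mixed terms and higher-order remainders vanish in probability by standard quadratic variation estimates.

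The martingale statement for $t \mapsto \int_{t_0}^t \langle G(s)^*\nabla_x\phi(s,X(s)),\, dW(s)\rangle$ follows once one has the stated $L^2$ integrability of the integrand: the general theory of Hilbert-space stochastic integrals with respect to a (cylindrical or $Q$-)Wiener motion gives both the continuous-martingale property and zero expectation, and the It\^o isometry combined with the integrability hypothesis ensures square integrability. The main obstacle in reproving this from scratch would be the infinite-dimensional quadratic-variation step, i.e.\ justifying that the discrete sums $\sum_k \langle \nabla_{xx}^2\phi \cdot G\Delta W_k, G\Delta W_k\rangle$ converge in $L^1$ to the trace integral; this requires care with the Hilbert-Schmidt structure of $G(s)$ and the trace-class nature of $G(s)G(s)^*$, which is precisely why the setup in Definition~\ref{def:strong-solution} measures $G$ in the Hilbert-Schmidt norm. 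Everything else is bookkeeping analogous to the finite-dimensional It\^o formula.
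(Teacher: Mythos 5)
You are right that the paper offers no proof of this proposition: it is imported verbatim as \cite[Theorem 2.9]{gawarecki-mandrekar} and used as a black box, so there is no in-paper argument to compare yours against. Your sketch is the standard localization-plus-Taylor-expansion derivation of the Hilbert-space It\^o formula and is a correct outline of how the cited result is proved; the one refinement worth noting is that the stopping times should also cap $\int_{t_0}^{t}\|F(s)\|\,ds$ and $\int_{t_0}^{t}\|G(s)\|_{HS}^{2}\,ds$ (not just $\|X(t)\|$), since under the stated almost-sure integrability alone the stochastic integral is a priori only a local martingale and the It\^o-isometry steps in your partition argument need that extra localization to go through.
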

Recall the definition of $x_{\varepsilon(t)}$: Let $F(t,x) = \nabla f(x) + \varepsilon(t)x$. Then, the implicit function theorem yields the existence of a continuously differentiable $x_{\varepsilon(t)}$ such that $F(t,x_{\varepsilon(t)}) = 0$ and $F(t,x) = 0 \Leftrightarrow x = x_{\varepsilon(t)}$. Our main result will use the lemma below for the estimation of $x_{\varepsilon(t)}$.
\begin{lem}(\cite[Lemma 2]{attouch-balhag-chbani-riahi})\label{lem:estimate-x-epsilon}
    As in the main body of the paper, let
\begin{align*}
    \varphi_t(x) := f(x) + \frac{\varepsilon(t)}{2}\|x\|^2.
\end{align*}
Then, the following statements are true:
    \begin{enumerate}[(i)]
        \item For each $t\geq t_0$, $\frac{d}{dt}(\varphi_t(x_{\varepsilon(t)})) = \frac{1}{2}\dot{\varepsilon}(t)\|x_{\varepsilon(t)}\|^2$.
        \item The function $t\mapsto x_{\varepsilon(t)}$ is Lipschitz continuous on the compact intervals of $(t_0,+\infty)$, hence differentiable almost everywhere, and the following inequality holds: For almost every $t\geq t_0$
        \begin{align*}
            \left\|\frac{d}{dt}x_{\varepsilon(t)}\right\|^2\leq -\frac{\dot{\varepsilon}(t)}{\varepsilon(t)}\left\langle \frac{d}{dt}x_{\varepsilon(t)}, x_{\varepsilon(t)}\right\rangle.
        \end{align*}
        Therefore, for almost every $t\geq t_0$,
        \begin{align*}
            \left\|\frac{d}{dt}x_{\varepsilon(t)}\right\| \leq -\frac{\dot{\varepsilon}(t)}{\varepsilon(t)}\|x_{\varepsilon(t)}\|.
        \end{align*}
    \end{enumerate}
\end{lem}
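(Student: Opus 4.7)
The plan is to treat the two parts separately, but both will hinge on the optimality condition $\nabla f(x_{\varepsilon(t)}) + \varepsilon(t) x_{\varepsilon(t)} = 0$, i.e.\ $\nabla_x \varphi_t(x_{\varepsilon(t)}) = 0$, which holds for every $t \geq t_0$.

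For part (i), the idea is to apply the chain rule to the composite map $t \mapsto \varphi_t(x_{\varepsilon(t)})$. Formally,
\[
  \frac{d}{dt}\varphi_t(x_{\varepsilon(t)}) = \frac{\partial \varphi_t}{\partial t}(x_{\varepsilon(t)}) + \Big\langle \nabla_x \varphi_t(x_{\varepsilon(t)}),\, \frac{d}{dt}x_{\varepsilon(t)}\Big\rangle.
\]
Differentiability of $t \mapsto x_{\varepsilon(t)}$ is provided by the implicit function theorem argument from the main body (using $F(t,x) = \nabla f(x) + \varepsilon(t) x$, which has Jacobian $\nabla^2 f + \varepsilon(t) I$ invertible by strong convexity of $\varphi_t$). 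The first summand is $\tfrac{1}{2}\dot{\varepsilon}(t)\|x_{\varepsilon(t)}\|^2$ by direct differentiation, and the second summand vanishes by the optimality condition, yielding the claim.

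For part (ii), the key tool will be strong monotonicity of $\nabla \varphi_t$. The plan is to write the optimality conditions at two times $s, t$ in a compact subinterval of $(t_0,+\infty)$,
\[
  \nabla f(x_{\varepsilon(t)}) + \varepsilon(t) x_{\varepsilon(t)} = 0, \qquad \nabla f(x_{\varepsilon(s)}) + \varepsilon(s) x_{\varepsilon(s)} = 0,
\]
subtract them, and take the inner product with $x_{\varepsilon(t)} - x_{\varepsilon(s)}$. Monotonicity of $\nabla f$ makes the $\nabla f$-contribution nonnegative, and after dropping it we obtain
\[
  \varepsilon(t)\|x_{\varepsilon(t)} - x_{\varepsilon(s)}\|^2 \leq (\varepsilon(s) - \varepsilon(t))\langle x_{\varepsilon(s)}, x_{\varepsilon(t)} - x_{\varepsilon(s)}\rangle.
\]
Using Cauchy--Schwarz and the bound $\|x_{\varepsilon(s)}\| \leq \|x^*\|$ (a standard property of the Tikhonov path), together with the fact that $\varepsilon$ is $\mathcal{C}^1$ and bounded below by a positive constant on compact subintervals of $(t_0,+\infty)$, I deduce Lipschitz continuity of $t \mapsto x_{\varepsilon(t)}$ on every such interval. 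Almost-everywhere differentiability then follows from the vector-valued Rademacher-type theorem for Lipschitz maps into a Hilbert space.

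To obtain the quantitative inequality, I divide the displayed monotonicity estimate by $(t-s)^2$ and pass to the limit $s \to t$ at any point of differentiability, producing
\[
  \varepsilon(t)\Big\|\tfrac{d}{dt}x_{\varepsilon(t)}\Big\|^2 \leq -\dot{\varepsilon}(t)\Big\langle x_{\varepsilon(t)},\, \tfrac{d}{dt}x_{\varepsilon(t)}\Big\rangle,
\]
which is the first stated inequality after dividing by $\varepsilon(t) > 0$. Applying Cauchy--Schwarz on the right-hand side and dividing by $\|\tfrac{d}{dt}x_{\varepsilon(t)}\|$ (handling the trivial case where this quantity vanishes separately) delivers the second stated inequality. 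The most delicate step is the Lipschitz-then-differentiability argument in the infinite-dimensional setting; however, since the right-hand sides of the SDE drive everything through quantities already shown to be continuous, and since the implicit function theorem argument in the main body already guarantees $\mathcal{C}^1$ regularity of $t \mapsto x_{\varepsilon(t)}$, the a.e. qualifier is effectively automatic and the inequality holds everywhere.
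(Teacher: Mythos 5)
The paper does not actually prove this lemma: it imports it verbatim as Lemma~2 of~\cite{attouch-balhag-chbani-riahi}, so there is no in-paper argument to compare against. Your proof is essentially the standard one from that reference and is correct in substance. Part (ii) is exactly right: subtracting the optimality conditions $\nabla f(x_{\varepsilon(t)})+\varepsilon(t)x_{\varepsilon(t)}=0$ at two times, pairing with $x_{\varepsilon(t)}-x_{\varepsilon(s)}$, discarding the nonnegative monotonicity term, and using $\|x_{\varepsilon(s)}\|\leq\|x^*\|$ together with the positive lower bound of $\varepsilon$ on compact subintervals gives local Lipschitz continuity; a.e.\ differentiability of Lipschitz curves into a Hilbert space (Radon--Nikod\'ym property) is legitimate; and dividing the monotonicity estimate by $(t-s)^2$ and letting $s\to t$ at a point of differentiability yields $\varepsilon(t)\|\frac{d}{dt}x_{\varepsilon(t)}\|^2\leq-\dot{\varepsilon}(t)\langle x_{\varepsilon(t)},\frac{d}{dt}x_{\varepsilon(t)}\rangle$, from which both displayed inequalities follow by Cauchy--Schwarz.

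The one soft spot is part (i). Your chain-rule argument needs $t\mapsto x_{\varepsilon(t)}$ to be differentiable at \emph{every} $t$, which you obtain from the implicit function theorem; that step requires $\nabla f$ to be $\mathcal{C}^1$, i.e.\ $f\in\mathcal{C}^2$, whereas the standing assumption~\eqref{eq:assumption-f} only gives $f\in\mathcal{C}^1$ with Lipschitz gradient. (The paper leans on the same IFT argument elsewhere and flags the $\mathcal{C}^2$ issue in a remark, so you are consistent with its framework, but strictly speaking the lemma does not need it.) The cleaner route, and the one used in the cited source, is an envelope argument: from optimality, $\varphi_t(x_{\varepsilon(t)})-\varphi_s(x_{\varepsilon(s)})\leq\varphi_t(x_{\varepsilon(s)})-\varphi_s(x_{\varepsilon(s)})=\frac{\varepsilon(t)-\varepsilon(s)}{2}\|x_{\varepsilon(s)}\|^2$ and symmetrically with the roles of $s,t$ exchanged; dividing by $t-s$ and using continuity of $s\mapsto x_{\varepsilon(s)}$ squeezes the difference quotient to $\frac{1}{2}\dot{\varepsilon}(t)\|x_{\varepsilon(t)}\|^2$ for \emph{each} $t$, with no differentiability of the path required. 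Alternatively, your a.e.\ identity plus local Lipschitz continuity of $t\mapsto\varphi_t(x_{\varepsilon(t)})$ and continuity of the candidate derivative would also upgrade to everywhere. Your closing sentence, asserting that the a.e.\ qualifier in (ii) is ``effectively automatic'' and the inequality holds everywhere, conflates the two regularity arguments and should be dropped; the statement only claims the inequality almost everywhere, and that is all your Lipschitz argument delivers under~\eqref{eq:assumption-f}.
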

Another theorem that will be instrumental for the convergence analysis is detailed below.
\begin{thm}(\cite[Theorem 3.9]{mao})\label{thm:A.9-in-paper}
    Let $\{A(t)\}_{t\geq 0}$ and $\{U(t)\}_{t\geq 0}$ be two continuous adapted increasing processes with $A(0)=U(0)=0$ a.s. Let $\{N(t)\}_{t\geq 0}$ be a real-valued continuous local martingale with $N(0)=0$ a.s. Let $\xi$ be a nonnegative $\mathcal{F}_0$-measurable random variable. Define
    \begin{align*}
        X(t):=\xi + A(t) - U(t) + N(t) \quad \text{for} \quad \forall t\geq 0.
    \end{align*}
    If $X(t)$ is nonnegative and $\lim_{t\rightarrow +\infty}{A(t)} < +\infty$ a.s., then a.s. $\lim_{t\rightarrow +\infty}{X(t)}$ exists and is finite, as well as $\lim_{t\rightarrow +\infty}{U(t)} < +\infty$.
\end{thm}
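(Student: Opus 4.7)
My plan is to apply the classical nonnegative supermartingale convergence theorem to the local martingale $N$ (after a suitable localization), which will yield the a.s.\ convergence of $N$; the convergence of $U$ and $X$ will then follow algebraically from the defining identity. The key observation is that rearranging $X \geq 0$ gives $N(t) = X(t) - \xi - A(t) + U(t) \geq -\xi - A(t)$, so once one controls $\xi + A(t)$ uniformly in $t$ by a deterministic constant, $N$ becomes a local martingale bounded below by a constant, hence (after shifting) a nonnegative local martingale, hence a supermartingale.

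To make this rigorous, I would localize in two stages. First, since $\xi$ is a.s.\ finite and $\mathcal{F}_0$-measurable, the events $\Omega_m := \{\xi \leq m\}$ cover $\Omega$ up to a null set, so it suffices to prove the claim on each $\Omega_m$; fix such an $m$. Second, introduce the stopping times $\sigma_k := \inf\{t \geq 0 : A(t) \geq k\}$, so that the stopped process $A^{\sigma_k}$ is bounded by $k$ (by continuity and monotonicity of $A$), and note that the hypothesis gives $\{A(\infty) < \infty\} = \bigcup_k \{A(\infty) < k\} \subseteq \bigcup_k \{\sigma_k = \infty\}$. Now set $\widetilde N_k(t) := N(t \wedge \sigma_k) + m + k$. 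On $\Omega_m$, for every $t \geq 0$, the rearrangement of $X \geq 0$ together with $A(t \wedge \sigma_k) \leq k$ and $\xi \leq m$ gives $\widetilde N_k(t) \geq X(t \wedge \sigma_k) \geq 0$, so $\widetilde N_k$ is a nonnegative continuous local martingale on $\Omega_m$.

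A standard conditional-Fatou argument along a localizing sequence of $N$ shows that every nonnegative continuous local martingale is a supermartingale; hence $\widetilde N_k$ is a supermartingale and, by Doob's classical convergence theorem for nonnegative supermartingales, converges a.s.\ to a finite limit. In particular $N(\cdot \wedge \sigma_k)$ converges a.s., and on $\{\sigma_k = \infty\}$ this means $N(t)$ itself converges. Taking the union over $k$ and then over $m$ yields $N(t) \to N_\infty$ a.s.\ with $N_\infty$ finite. With both $A(\infty)$ and $N_\infty$ finite a.s., the inequality $U(t) \leq \xi + A(t) + N(t)$ (immediate from $X \geq 0$) shows that the nondecreasing process $U$ is bounded a.s., so $U(\infty) := \lim_{t \to \infty} U(t)$ exists and is finite; then $X(t) = \xi + A(t) - U(t) + N(t) \to \xi + A(\infty) - U(\infty) + N_\infty$, finite, as desired. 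The main technical obstacle is the passage from a local martingale bounded below only by a random quantity to a bona fide supermartingale: this is precisely why the double localization in $\xi$ (via $\Omega_m$) and in $A$ (via $\sigma_k$) is needed, since it converts the random lower bound $-\xi - A(t)$ into the deterministic constant $-m - k$ so that the conditional-Fatou argument goes through.
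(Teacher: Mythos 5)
This statement is quoted in the appendix as a known result, cited from \cite{mao}, and the paper itself supplies no proof, so there is nothing in-paper to compare against; your argument therefore has to stand on its own, and it does. The double localization --- first on the $\mathcal{F}_0$-measurable events $\Omega_m=\{\xi\leq m\}$, then via the stopping times $\sigma_k=\inf\{t\geq 0: A(t)\geq k\}$ --- correctly converts the random lower bound $N(t)\geq -\xi-A(t)$ (which follows from $X\geq 0$ and $U\geq 0$) into the deterministic bound $-m-k$ for the stopped process, so that $\widetilde N_k$ is a nonnegative continuous local martingale with integrable initial value, hence a supermartingale by conditional Fatou, hence a.s.\ convergent by Doob. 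The exhaustion $\{A(\infty)<\infty\}\subseteq\bigcup_k\{\sigma_k=\infty\}$ and the union over $m$ then give a.s.\ convergence of $N$, and the monotonicity of $U$ together with $U(t)\leq \xi+A(t)+N(t)$ finishes the proof exactly as you say. This is essentially the classical proof one finds in the cited reference. The only point worth tightening is the phrase ``local martingale on $\Omega_m$'': to be fully rigorous, either pass to the restricted probability space $(\Omega_m,\mathbb{P}(\cdot\mid\Omega_m))$ or work with $\mathbf{1}_{\Omega_m}\widetilde N_k$, which remains a local martingale because $\Omega_m\in\mathcal{F}_0$; either device makes the supermartingale step airtight.
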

The Gronwall-Bellman Lemma will also be of use to us and is recalled here for the reader's convenience.
\begin{lem}\label{lem:gronwall-bellman}
    Let $m:[\delta,T] \rightarrow [0,+\infty)$ be integrable, and let $c\geq 0$. Suppose $w:[\delta,T]\rightarrow \R$ ia continuous and
    \begin{align*}
        \frac{1}{2}w(t)^2 \leq \frac{1}{2}c^2 + \int_{\delta}^{t} m(s)w(s)ds
    \end{align*}
    for all $t\in [\delta, T]$. Then, $|w(t)| \leq c + \int_{\delta}^{t} m(s)ds$ for all $t\in[\delta,T]$.
\end{lem}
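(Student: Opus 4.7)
The plan is to use the standard perturbation trick that underlies square-root Gronwall inequalities: replace the bound by one that is strictly positive, differentiate the square root, and pass to the limit. Concretely, for each $\eta>0$ define
\[
\phi_\eta(t) := \tfrac{1}{2}(c+\eta)^2 + \int_\delta^t m(s) w(s)\,ds, \qquad t\in[\delta,T],
\]
and observe from the hypothesis that $\tfrac{1}{2}w(t)^2 < \phi_\eta(t)$ for all $t$, which in particular forces $\phi_\eta(t) > 0$ throughout $[\delta,T]$. Hence $\sqrt{2\phi_\eta(t)}$ is well-defined and strictly positive, and $|w(t)| < \sqrt{2\phi_\eta(t)}$.

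Next, I would differentiate. Since $m$ is only integrable I cannot assert classical differentiability everywhere, but $\phi_\eta$ is absolutely continuous with $\phi_\eta'(t) = m(t) w(t)$ almost everywhere, and hence $t\mapsto \sqrt{2\phi_\eta(t)}$ is also absolutely continuous with
\[
\frac{d}{dt}\sqrt{2\phi_\eta(t)} = \frac{\phi_\eta'(t)}{\sqrt{2\phi_\eta(t)}} = \frac{m(t) w(t)}{\sqrt{2\phi_\eta(t)}} \le \frac{m(t) |w(t)|}{\sqrt{2\phi_\eta(t)}} < m(t)
\]
for a.e.\ $t$. Integrating this inequality from $\delta$ to $t$, using that $\sqrt{2\phi_\eta(\delta)} = c+\eta$, yields
\[
\sqrt{2\phi_\eta(t)} \le c+\eta + \int_\delta^t m(s)\,ds.
\]
Combining with $|w(t)| < \sqrt{2\phi_\eta(t)}$ and letting $\eta\downarrow 0$ gives the desired estimate $|w(t)|\le c+\int_\delta^t m(s)\,ds$.

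The only genuinely delicate point is making sure the manipulations are justified when $c=0$, since then without the $\eta$-perturbation the function $\phi_0$ could vanish at $\delta$ and the chain rule for the square root would fail. The perturbation $c\mapsto c+\eta$ precisely sidesteps this: it enforces $\phi_\eta\ge \tfrac{1}{2}\eta^2>0$ on all of $[\delta,T]$, so the divisor $\sqrt{2\phi_\eta(t)}$ is bounded away from $0$ and the absolute continuity/chain rule argument goes through cleanly; the conclusion for $c=0$ then drops out by monotone passage to the limit $\eta\downarrow 0$. No further machinery (e.g.\ exponential integrating factor as in the classical Gronwall-Bellman lemma) is needed here, because the quadratic-versus-linear structure of the hypothesis already converts into the additive bound directly via the $\sqrt{\cdot}$ substitution.
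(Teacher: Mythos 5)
Your proof is correct. Note that the paper itself states this lemma without proof, merely recalling it as a standard auxiliary result, so there is no in-paper argument to compare against; your perturbation-plus-square-root argument is the classical one and all the steps are justified: the shift $c\mapsto c+\eta$ gives $\phi_\eta\ge\tfrac12\eta^2$ via $\int_\delta^t m(s)w(s)\,ds\ge\tfrac12 w(t)^2-\tfrac12 c^2\ge-\tfrac12 c^2$, the map $x\mapsto\sqrt{2x}$ is Lipschitz on $[\tfrac12\eta^2,\infty)$ so $\sqrt{2\phi_\eta}$ is absolutely continuous and the a.e.\ chain rule applies, and the limit $\eta\downarrow 0$ is harmless. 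The only cosmetic slip is the final strict inequality $\frac{m(t)|w(t)|}{\sqrt{2\phi_\eta(t)}}<m(t)$, which fails at points where $m(t)=0$ (both sides vanish); the non-strict bound $\le m(t)$ is what you actually have and is all the integration step requires.
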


\printbibliography[]
\end{document}